\newtheorem{thm}{Theorem}[section]
\newtheorem{cor}[thm]{Corollary}
\newtheorem{lem}[thm]{Lemma}
\newtheorem{prop}[thm]{Proposition}
\theoremstyle{definition}
\newtheorem{defin}[thm]{Definition}
\newtheorem{rem}[thm]{Remark}
\numberwithin{equation}{section}
\newcommand{\abs}[1]{\lvert#1\rvert}
\begin{document}

%%%%% To ease editing, for IMPAN journals add:

\baselineskip=17pt

%%%%%%%%%%%

%% In the running head, replace first names by initials 
%% and give an abbreviation of the title.

\title[The Bergman representative map]{A differential-geometric analysis of the Bergman representative map}

\author[S. Yoo]{Sungmin Yoo}
\address{Department of Mathematics\\ Pohang University of Science and Technology\\
37673, Pohang, Republic of Korea}
\email{sungmin@postech.ac.kr}

%\author[J. K. Nowak]{Jan Krzysztof Nowak}
%\address{Institute of Mathematics\\ Warsaw University\\
%02-097 Warszawa, Poland}
%\email{jk.nowak@mimuw.edu.pl}

\thanks{This research was supported by Global Ph.D. Fellowship Program (NRF-2011-0006432) and the SRC-GAIA (NRF-2011-0030044) through the National Research Foundation of Korea (NRF) funded by the Ministry of Education.}

\date{}

\begin{abstract}
We show that the exponential map of the Bochner connection on the restricted holomorphic tangent bundle of a complex manifold admitting the positive-definite Bergman metric coincides with the inverse of Bergman's representative map. We also present a generalization of Lu Qi Keng's theorem, as an application.
\end{abstract}

\subjclass[2010]{32C15, 53B35, 53C55}

\keywords{Bergman representative map, Bergman metric, real-analytic K\"{a}hler metric, Bochner normal coordinates}

\maketitle

\section{Introduction}
On a complex manifold equipped with the Bergman kernel and metric, the Bergman representative map, originally named as ``the representative domain'' by Stefan Bergman himself, is an important offshoot of the Bergman kernel form (cf. \cite{GKK2011}, Chapter 4). It is a special holomorphic map which is in a significant contrast with the exponential map of the Riemannian structure given by the real part of the Bergman metric; the Riemannian exponential map is almost never holomorphic. On the other hand, the representative map gives rise to a holomorphic K\"{a}hler normal coordinate system with respect to the Bergman metric. One of its best known features is that all holomorphic Bergman isometries become linear mappings in these representative coordinates. In spite of the difficulty that this map need not be well-defined globally, this feature has been proven to be useful in many important work (see for instance, \cite{Lu1966}, \cite{bell1980}, \cite{Webster1979}, \cite{greene1985}, et al.). However, it was striking to us that no systematic study of this concept has yet been carried out. 

In this paper, we describe the relationship between the Bergman representative map and the Riemannian exponential map of the Bergman metric in terms of Differential Geometry. In particular, we present a construction of the torsion-free flat holomorphic affine connection on the holomorphic tangent bundle of an open dense subdomain of the given complex manifold, whose affine exponential map is the inverse to the representative map (Theorem \ref{connection}). This yields a differential geometric interpretation of the Bergman representative map.

It is worth mentioning that our connection was discovered, at least partially, by several other authors in the articles preceding this paper, even though the information was scattered around in the papers such as \cite{bochner1947} (much earlier than the others; in fact, Bochner constructed ``normal'' coordinates only, which can develop into the connection), \cite{calabi1953isometric}, and \cite{bcov1994}. It is also studied independently in \cite{Demailly1982} and \cite{kapranov1999} in relation to the holomorphic part of the K\"{a}hler metric connection (a symplectic geometric interpretation can be found in \cite{kontsevich1995} and \cite{ruan98}). More notably, as the connection for the case of ``bounded domains'', it was studied in \cite{Webster1979} for a version of extension theorem for biholomorphic mappings. We hope that this paper shows these concepts in a unified way.

This paper is organized as follows: First, we briefly review fundamentals of Bergman geometry including the construction of the concept of the representative map. Then we present Bochner's normal coordinate system for the real analytic K\"{a}hler manifolds and the affine connection. We would like to call it {\it the Bochner connection}. Then, we restrict ourselves to complex manifolds with the Bergman metric, and study the Bochner connnection. In Section 5 and 6, we analyze the geodesic behavior of the Bochner connection and the removed varieties. In the last section, as an application, we present a generalization of the theorem by Lu Qi-Keng \cite{Lu1966}, which says that a bounded domain in $\mathbb{C}^n$ whose Bergman metric is complete and of constant holomorphic sectional curvature is biholomorphic to the unit ball. We were able to generalize this to the case of bounded domains with a \textit{pole of the Bochner connection} such as a circular domain or a homogeneous domain.

%----------------------------------------------------------------------------------------------------
%---------------------------------------------------------------------------------------------

\section{Fundamentals of Bergman geometry}

%----------------------------------------------------------------------------------------------------

\subsection{The Bergman kernel and metric for a bounded domain in $\mathbb{C}^n$}

Let $\Omega$ be a bounded domain in $\mathbb{C}^n$ and $K(z,\overline{w})$ the Bergman kernel of $\Omega$. Since $K(z,\overline{z})>0$, the Bergman metric 
$$
g_{\Omega}(z)=\sum\limits_{j,k=1}^n g_{j\overline{k}}(z) dz_j\otimes d\overline{z_k} \text{\ \ \ \ with\ \ } g_{j\overline{k}}(z)=g_{j\overline{k}}(z,\overline{z}):=\frac{\partial^2\log K(z,\overline{z})}{\partial z_j\partial \overline{z_k}}
$$
is well-defined. In fact, the following result was proved by Bergman himself \cite{bergman1970kernel}:

\begin{thm}[Bergman]
The Bergman metric $g_{\Omega}$ is positive-definite at every $z \in\Omega$.
\end{thm}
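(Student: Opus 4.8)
The plan is to realize the Bergman kernel through an orthonormal basis and then to identify the complex Hessian of $\log K$ with a genuine Gram-type expression that is manifestly nonnegative, extracting strict positivity from the equality case of the Cauchy--Schwarz inequality. Fix $z\in\Omega$ and let $\{\phi_l\}_{l\ge 1}$ be an orthonormal basis of the Hilbert space $A^2(\Omega)$ of square-integrable holomorphic functions on $\Omega$, so that $K(z,\ov{w})=\sum_l\phi_l(z)\,\ov{\phi_l(w)}$. First I would differentiate $\log K$ and write, with $K=K(z,\ov{z})$ viewed as a real-analytic function and $K_j:=\partial K/\partial z_j$, $K_{\ov{k}}:=\partial K/\partial\ov{z_k}$, $K_{j\ov{k}}:=\partial^2 K/\partial z_j\partial\ov{z_k}$,
\[
g_{j\ov{k}}(z)=\frac{K_{j\ov{k}}}{K}-\frac{K_jK_{\ov{k}}}{K^2}.
\]
Since $K(z,\ov{z})=\sum_l\abs{\phi_l(z)}^2>0$ (constants lie in $A^2(\Omega)$ because $\Omega$ has finite volume, so not all $\phi_l$ vanish at $z$), the denominators cause no trouble.

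Next, for a fixed nonzero $\xi=(\xi_1,\dots,\xi_n)\in\mathbb{C}^n$ I would set $a_l:=\sum_j\xi_j\,\tfrac{\partial\phi_l}{\partial z_j}(z)$ and $b_l:=\phi_l(z)$. Term-by-term differentiation of the basis expansion (using $\partial_{z_j}\ov{\phi_l}=0$) gives $K=\sum_l\abs{b_l}^2$, $\ \sum_j K_j\xi_j=\sum_l a_l\ov{b_l}$, and $\ \sum_{j,k}K_{j\ov{k}}\xi_j\ov{\xi_k}=\sum_l\abs{a_l}^2$. Substituting these into the formula above, the quadratic form becomes
\[
\sum_{j,k}g_{j\ov{k}}(z)\,\xi_j\ov{\xi_k}
=\frac{1}{K^2}\left(\Big(\sum_l\abs{b_l}^2\Big)\Big(\sum_l\abs{a_l}^2\Big)-\Big|\sum_l a_l\ov{b_l}\Big|^2\right),
\]
which is nonnegative by the Cauchy--Schwarz inequality in $\ell^2$. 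Thus $g_{\Omega}$ is at least positive-semidefinite, and the real content of the theorem is the strict inequality.

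The main obstacle is therefore the analysis of the \emph{equality case}, and this is exactly the step that forces me to use the boundedness of $\Omega$. Equality in Cauchy--Schwarz, together with $b\neq 0$, requires $(a_l)=\lambda(b_l)$ for some scalar $\lambda$, i.e. $\sum_j\xi_j\,\tfrac{\partial\phi_l}{\partial z_j}(z)=\lambda\,\phi_l(z)$ for every $l$. Writing $D:=\sum_j\xi_j\,\partial/\partial z_j$ and expanding an arbitrary $f=\sum_l c_l\phi_l\in A^2(\Omega)$, the boundedness of the evaluation and derivative functionals lets me pass the operators inside the sum to conclude $Df(z)=\lambda f(z)$ for \emph{all} $f\in A^2(\Omega)$. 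Testing on $f\equiv 1$ (square-integrable since $\Omega$ is bounded) gives $\lambda=0$; testing then on each coordinate function $f(w)=w_m$ (also in $A^2(\Omega)$ on a bounded domain) yields $\xi_m=Df(z)=0$ for every $m$, whence $\xi=0$. So equality cannot occur for $\xi\neq 0$, and the form is strictly positive. I expect the only routine technical point to be the justification of the term-by-term differentiation, which follows from the locally uniform convergence of the kernel series and its derivatives on compact sets.
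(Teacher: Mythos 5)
Your argument is correct and complete: the reduction of $\sum g_{j\ov{k}}\xi_j\ov{\xi_k}$ to the Cauchy--Schwarz defect for the $\ell^2$ sequences $(a_l)$ and $(b_l)$, followed by the analysis of the equality case using the functions $1$ and $w_m$ (which lie in $A^2(\Omega)$ precisely because $\Omega$ is bounded), is the standard and rigorous proof of Bergman's theorem. The paper itself offers no proof, merely citing Bergman's book, so there is nothing to compare against; the only step you flag as routine --- term-by-term differentiation --- is indeed justified by the boundedness of the evaluation functionals $f\mapsto f(z)$ and $f\mapsto \partial f/\partial z_j(z)$ on $A^2(\Omega)$, which makes $A^2$-convergence imply locally uniform convergence of the series and of its derivatives.
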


\begin{rem}
Note that $g_{\Omega}$ is a K\"{a}hler metric. The transformation formula for the Bergman kernel function (under biholomorphisms) implies that every biholomorphism between bounded domains is an isometry with respect to the Bergman metric.
\end{rem}

%--------------------------------------------------------------------------------------------

\subsection{The Bergman representative map}\label{br}

Let $p$ be a point of $\Omega$. Since $K(p,\overline{p})>0$, there is a neighborhood of $p$ such that $K(z,\overline{w})\neq 0$ for all $z,w$ in that neighborhood. Denote by ${g}^{\overline{k}j}(p)$ the $(k,j)$-th entry of the inverse matrix of $(g_{j\overline{k}}(p))$.

\begin{defin}\label{rep}
The {\it Bergman representative map} at $p$ is defined by 
$$
\hbox{rep}_p(z)=(\zeta_1(z),\ldots,\zeta_n(z)),
$$
where: 
$$
\zeta_j(z):={g}^{\overline{k}j}(p)\Big\{\frac{\partial}{\partial \overline{w_k}}\Big|_{w=p}\log K(z,\overline{w})-\frac{\partial}{\partial \overline{w_k}}\Big|_{w=p}\log K(w,\overline{w})\Big\}.
$$
\end{defin}

Since $\frac{\partial \zeta_k}{\partial z_l}|_{z=p}=\delta_{lk}$, this map defines a holomorphic local coordinate system at $p$. Another special feature is in the following theorem by Bergman himself.

\begin{thm}[Bergman]\label{clin}
If $f:\Omega\rightarrow\tilde{\Omega}$ is a biholomorphic mapping of bounded domains, then $\hbox{\rm rep}_{f(p)}\circ f\circ \hbox{\rm rep}_p^{-1}$ is $\mathbb{C}$-linear.
\end{thm}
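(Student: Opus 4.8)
The plan is to trace the transformation law of the Bergman kernel through the definition of the representative map and let the isometry property of $f$ collapse everything to a constant matrix. Write $q=f(p)$, let $A=(A_{lj})$ with $A_{lj}=\partial f_l/\partial z_j\,(p)$ be the Jacobian matrix of $f$ at $p$, and introduce the covectors
$$
\Phi_k(z):=\frac{\partial}{\partial\ov{w_k}}\Big|_{w=p}\log K_\Omega(z,\ov w),
\qquad
\tilde\Phi_l(\tilde z):=\frac{\partial}{\partial\ov{\tilde w_l}}\Big|_{\tilde w=q}\log K_{\tilde\Omega}(\tilde z,\ov{\tilde w}).
$$
Since the Wirtinger derivative in $\ov{w_k}$ ignores the holomorphic slot, one has $\frac{\partial}{\partial\ov{w_k}}|_{w=p}\log K(w,\ov w)=\Phi_k(p)$, so Definition \ref{rep} reads $\zeta_j(z)=\sum_k g^{\ov kj}(p)\,\bigl(\Phi_k(z)-\Phi_k(p)\bigr)$, and the analogous formula holds for $\tilde\zeta$ on $\tilde\Omega$.

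First I would apply the transformation formula $K_\Omega(z,\ov w)=K_{\tilde\Omega}(f(z),\ov{f(w)})\,\det f'(z)\,\ov{\det f'(w)}$, take logarithms, and differentiate in $\ov{w_k}$ at $w=p$. The term $\log\det f'(z)$ is holomorphic in $z$ and is annihilated, while $\ov{\log\det f'(w)}$ contributes only an additive constant $\beta_k$; the chain rule applied to the middle term, using $\partial\ov{f_l(w)}/\partial\ov{w_k}=\ov{A_{lk}}$ at $w=p$, gives
$$
\Phi_k(z)=\sum_l \tilde\Phi_l(f(z))\,\ov{A_{lk}}+\beta_k .
$$
Forming the difference $\Phi_k(z)-\Phi_k(p)$ cancels $\beta_k$, so that $\Phi_k(z)-\Phi_k(p)=\sum_l u_l\,\ov{A_{lk}}$ with $u_l:=\tilde\Phi_l(f(z))-\tilde\Phi_l(q)$.

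The remaining step feeds this into the definitions and lets the isometry do the work. Because $f$ is a Bergman isometry, the metrics satisfy $g_{j\ov k}(p)=\sum_{a,b}A_{aj}\,\tilde g_{a\ov b}(q)\,\ov{A_{bk}}$, i.e.\ $g(p)=A^{t}\,\tilde g(q)\,\ov A$ in matrix form, whence $g(p)^{-1}=\ov A^{-1}\,\tilde g(q)^{-1}\,(A^{t})^{-1}$. Substituting $\Phi_k(z)-\Phi_k(p)=\sum_l u_l\,\ov{A_{lk}}$ into $\zeta_j(z)=\sum_k g^{\ov kj}(p)\bigl(\Phi_k(z)-\Phi_k(p)\bigr)$ and collapsing the sum over $k$ by the identity $\sum_k\ov{A_{lk}}\,(\ov A^{-1})_{ka}=\ov{(AA^{-1})_{la}}=\delta_{la}$, I expect the inverse Bergman metric on $\Omega$ to recombine exactly into the inverse metric on $\tilde\Omega$, leaving $\zeta_j(z)=\sum_b (A^{-1})_{jb}\,\tilde\zeta_b(f(z))$, equivalently
$$
\tilde\zeta_b(f(z))=\sum_j A_{bj}\,\zeta_j(z).
$$
Since $A=f'(p)$ is a constant matrix, this says precisely that $\hbox{\rm rep}_{f(p)}\circ f\circ\hbox{\rm rep}_p^{-1}$ is the $\mathbb C$-linear map given by the Jacobian of $f$ at $p$.

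The computation is essentially forced once the set-up is fixed, so I anticipate the main obstacle to be purely bookkeeping: keeping the barred and unbarred indices of $g^{\ov kj}$, $\tilde g^{\ov lm}$ and of the Jacobian mutually consistent, and checking that the cancellation $\sum_k\ov{A_{lk}}(\ov A^{-1})_{ka}=\delta_{la}$ is exactly what makes the two inverse metrics cancel against the two Jacobian factors. I would also remark that no additive term survives, because both representative maps send their centres to the origin and $f(p)=q$; this is why one obtains genuine $\mathbb C$-linearity rather than merely affine-linearity.
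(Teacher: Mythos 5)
Your computation is correct, but it is not the route this paper takes: it is precisely the ``direct computation using the transformation formula'' that the paper attributes to Bergman and explicitly declines to reproduce. The paper's own proof is the differential-geometric one promised in Section \ref{br} and carried out in Theorem \ref{connection} and Remark \ref{remc}: one builds the flat holomorphic Bochner connection $\nabla^p$ on $T'M^p$, shows it is natural under biholomorphisms (statement (3) of Theorem \ref{connection}, via the transformation rule (\ref{trc}) for the connection $1$-forms), and observes that geodesics of $\nabla^p$ become straight lines in the representative coordinates; hence $\hbox{\rm rep}_{f(p)}\circ f\circ\hbox{\rm rep}_p^{-1}$ maps lines to lines, fixes the origin, and being holomorphic must be $\mathbb{C}$-linear. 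Your approach buys something the geometric proof leaves implicit: you identify the linear map concretely as the Jacobian $f'(p)$, and the cancellation $g(p)^{-1}=\ov{A}^{-1}\,\tilde g(q)^{-1}\,(A^{t})^{-1}$ against the factor $\ov{A_{lk}}$ coming from the chain rule is exactly right, as is the observation that the pluriharmonic terms $\log\det f'(z)+\ov{\log\det f'(w)}$ contribute only an additive constant killed by the difference in Definition \ref{rep}. What the paper's route buys in exchange is generality and conceptual content: it transfers verbatim to complex manifolds carrying a Bergman kernel form (where a single global transformation formula in fixed coordinates is not available), it works on all of $M^p=M-(Z_0^p\cup Z_1^p)$ rather than just near $p$, and it explains the linearity as an instance of affine naturality of a flat connection. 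Both arguments are sound; yours is the more elementary and more explicit one, the paper's the more structural one.
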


The original proof of this by Bergman was via a direct computation using the transformation formula. On the other hand, a differential geometric proof using the Bochner connection will be presented in Section \ref{bb} (see Theorem \ref{connection} as well as Remark \ref{remc}). Since the Bergman kernel and metric can be defined for complex manifolds \cite{kobayashi1959}, this geometric explanation applies to the case of complex manifolds.

%----------------------------------------------------------------------------------------------------

\subsection{The Bergman kernel form on a complex manifold}

Let $M$ be an $n$-dimensional complex manifold and $A^2(M)$ the space of holomorphic $n$-forms $f$ on $M$ satisfying 
$$
\Big|\int_Mf\wedge\overline{f}\Big|<\infty.
$$ 
Let $\{\phi_0,\phi_1,\phi_2,\ldots\}$ be a complete orthonormal system for the Hilbert space $A^2(M)$ and $\overline{M}$ the complex manifold conjugate to $M$. Define the holomorphic $2n$-form on $M\times\overline{M}$ by
$$
K(z,\overline{w})=\sum_{j\geq0}\phi_j(z)\wedge\overline{\phi_j(w)}.
$$
This construction is independent of the choice of orthonormal basis. Using the diagonal embedding $\iota:M\hookrightarrow M\times\overline{M}$, defined by $\iota(z)=(z,\overline{z})$, and the natural identification of $M$ with $\iota(M)$, $K(z,\overline{z})$ can be considered as a $2n$-form on $M$. This is called the {\it Bergman kernel form} of $M$.

Consider the case that the Bergman kernel form is non-zero at any point of $M$. In a local coordinate system $(U,(z_1,\ldots,z_n))$, the Bergman kernel form can be written as
$$
K(z,\overline{z})=K^{\ast}_U(z,\overline{z})dz_1\wedge\cdots\wedge dz_n\wedge d\overline{z_1}\wedge\cdots\wedge d\overline{z_n},
$$ 
where $K^{\ast}_U(z,\overline{z})$ is a well-defined function on $U$. Set
$$
ds^2_M:=\sum\limits_{j,k=1}^n g_{j\overline{k}}(z)dz_j\otimes d\overline{z_k}=\sum\limits_{j,k=1}^n \frac{\partial^2\log K^{\ast}_U(z,\overline{z})}{\partial z_j\partial\overline{z_k}}dz_j\otimes d\overline{z_k}.
$$
This is independent of the choice of local coordinate system. When the matrix $G(z):=(g_{j\overline{k}}(z))$ is positive-definite for each $z\in M$, $ds^2_M$ is called the {\it Bergman metric} of $M$.

%------------------------------------------------------------------------------------------------------------

\subsection{Bergman representative coordinates}

From now on, suppose that $M$ is a complex manifold which possesses the Bergman metric. (In fact, many non-compact complete K\"{a}hler manifold with negative curvature admit the Bergman metric \cite{GW1979}, Theorem H). In a local coordinate system $(U\times \overline{V},(z_1,\ldots,z_n,\overline{w_1},\ldots,\overline{w_n}))$ for $M\times\overline{M}$, 
$$
K(z,\overline{w})=K^{\ast}_{U\times\overline{V}}(z,\overline{w})dz_1\wedge\cdots\wedge dz_n\wedge d\overline{w_1}\wedge\cdots\wedge d\overline{w_n},
$$
where $K^{\ast}_{U\times\overline{V}}(z,\overline{w})$ is a well-defined function on $U\times \overline{V}$. Given a point $\overline{p}\in\overline{V}$, define the following holomorphic coordinate system centered at $p$ (cf. \cite{davidov1977},\cite{GKK2011}, and \cite{dinew2011}).

\begin{defin}
The {\it Bergman representative coordinate system} at $p$ is defined by 
$$
\hbox{rep}_p(z)=(\zeta_1(z),\ldots,\zeta_n(z)),
$$
where: 
$$
\zeta_j(z):={g}^{\overline{k}j}(p)\Big\{\frac{\partial}{\partial \overline{w_k}}\Big|_{w=p}\log K^{\ast}_{U\times\overline{V}}(z,\overline{w})-\frac{\partial}{\partial \overline{w_k}}\Big|_{w=p}\log K^{\ast}_{V\times\overline{V}}(w,\overline{w})\Big\}.
$$
\end{defin}

\begin{rem}
The above construction is independent of the choice of a coordinate system $(U,(z_1,\ldots,z_n))$ for $M$. But it depends on the choice of local coordinate system $(V,(w_1,\ldots,w_n))$. Note that $\hbox{rep}_p$ extends to a almost global function, well-defined on the whole of $M$ except for the analytic variety $Z^p_0:=\{z\in M:K(z,\overline{p})=0\}$ (this set is well-defined because of the transformation formula \ref{trk} in section 4.1).
\end{rem}

%--------------------------------------------------------------------------------------------
%---------------------------------------------------------------------------------------------

\section{Bochner's normal coordinates and connection}

%----------------------------------------------------------------------------------------------------

For real analytic K\"{a}hler manifolds, Bochner constructed the K\"{a}hler normal coordinate system, a version of the representative coordinate system from the K\"{a}hler potential \cite{bochner1947}. This normal coordinate system is strongly related to the exponential map of the K\"{a}hler metric \cite{Demailly1982}. We feel that this relation can be better explained via the language of vector bundles and connections \cite{kapranov1999}. Therefore, we reorganize this information, scattered in the literature.

%--------------------------------------------------------------------------------------------

\subsection{Bochner's normal coordinates}

Suppose that $M$ is a K\"{a}hler manifold with the real analytic K\"{a}hler metric $g$. In \cite{bochner1947}, a K\"{a}hler normal coordinate system is defined as follows: 

\begin{prop}[Bochner's normal coordinates]
Given $p\in M$, there exist holomorphic coordinates $(\zeta_1,\ldots,\zeta_n)$, unique up to unitary linear transformations satisfying
\begin{enumerate}
\item [(i)] $\zeta(p)=0$,\smallskip
\item [(ii)] $g_{j\overline{k}}(p)=\delta_{jk}$,\smallskip
\item [(iii)] $dg_{j\overline{k}}(p)=0$,\smallskip
\item [(iv)] $\frac{\partial^Ig_{j\overline{k}}}{\partial \zeta_1^{i_1} \cdots\partial \zeta_n^{i_n}}(p)=0$, for all $I\geq 1$ and $i_1+\cdots+i_n=I$.
\end{enumerate}
\end{prop}

In \cite{bcov1994}, Bochner's coordinate system was rediscovered in the context of mathematical physics. There, the Bochner coordinates were called the {\it canonical coordinates}. Their result is 

\begin{prop}[Bershadsky, Cecotti, Ooguri and Vafa \cite{bcov1994}]
Bochner's normal coordinates $(\zeta_1,\ldots,\zeta_n)$ can be expressed in terms of the K\"{a}hler potential $\psi(z,\overline{z})$:
$$
\zeta_j(z)=\sqrt{g}^{\overline{k}j}(p)\Big\{\frac{\partial}{\partial \overline{w_k}}\Big|_{w=p}\psi(z,\overline{w})-\frac{\partial}{\partial \overline{w_k}}\Big|_{w=p}\psi(w,\overline{w})\Big\},
$$
where ${\sqrt{g}}^{\overline{k}j} (p)$ is as follows: Since $G(p):=(g_{j\overline{k}}(p))$ is a positive-definite Hermitian matrix, there exists a matrix $A$ such that $G(p)=A \overline{A^t}$. Denote by ${\sqrt{g}}^{\overline{k}j}(p)$ the $(k,j)$-th entry of the inverse matrix of $A$.
\end{prop}

\begin{cor}
Bochner's normal coordinate system for a manifold with the Bergman metric is the same as the Bergman representative coordinate system of the K\"{a}hler potential $\log K(z,\overline{z})$ up to the normalization factor $\sqrt{g}^{\overline{k}j}(p)$.
\end{cor}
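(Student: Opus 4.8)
The plan is to specialize the formula of the preceding proposition to the Bergman case and to match it, term by term, against the defining formula of the Bergman representative coordinate system; no genuine computation should be required. First I would recall that, by the very definition of the Bergman metric, the locally defined real-analytic function $\psi(z,\ov{z})=\log K^{\ast}_U(z,\ov{z})$ is a K\"{a}hler potential for $ds^2_M$ on the patch $U$, since $g_{j\ov{k}}=\partial^2\log K^{\ast}_U/\partial z_j\partial\ov{z_k}$. Thus a manifold carrying the Bergman metric is exactly the situation to which the previous proposition applies, with this particular choice of $\psi$.

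The one point that requires a word of justification is the polarization. The proposition is phrased in terms of the polarized potential $\psi(z,\ov{w})$, holomorphic in $z$ and anti-holomorphic in $w$, obtained from $\psi(z,\ov{z})$ by complexifying the diagonal. I would observe that this polarization is nothing other than $\log K^{\ast}_{U\times\ov{V}}(z,\ov{w})$: the off-diagonal kernel coefficient $K^{\ast}_{U\times\ov{V}}(z,\ov{w})$ is already holomorphic in $z$ and anti-holomorphic in $w$, and it restricts to $K^{\ast}_U(z,\ov{z})$ along the diagonal $\iota(M)$, so by uniqueness of analytic continuation its logarithm is the required polarization of $\psi$. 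Making the same remark for the second bracketed term (with $\psi(w,\ov{w})$ and the patch $V\times\ov{V}$) identifies the two brackets verbatim.

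With these identifications, substituting $\psi=\log K^{\ast}$ into the formula of the proposition turns its bracket into exactly the bracket appearing in the definition of the representative coordinates. The only surviving discrepancy is the constant prefactor: Bochner's coordinates carry $\sqrt{g}^{\ov{k}j}(p)$, the entries of $A^{-1}$ where $G(p)=A\ov{A^t}$, whereas the representative coordinates carry $g^{\ov{k}j}(p)$, the entries of $G(p)^{-1}$. This is precisely the content of the phrase ``up to the normalization factor''.

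Finally, to make the comparison quantitative I would note that each coordinate system is obtained by applying a fixed invertible matrix to one and the same holomorphic $\mathbb{C}^n$-valued map, whose $k$-th component is the common bracket. Hence the two systems differ by a single constant element of $GL_n(\mathbb{C})$ built from $G(p)^{-1}$ and $A$. Geometrically the extra factor $A^{-1}$ records the additional normalization that upgrades the representative coordinates (whose Jacobian at $p$ is the identity, so that the metric there equals $G(p)$) to Bochner's coordinates (in which the metric at $p$ equals the identity, condition (ii)). The main --- and essentially only --- obstacle is the bookkeeping of the polarization step described above; once that is settled the corollary is an immediate comparison of two explicit formulas.
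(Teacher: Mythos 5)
Your proposal is correct and follows exactly the route the paper intends: the corollary is stated without proof precisely because it is the immediate comparison of the Bershadsky--Cecotti--Ooguri--Vafa formula (with $\psi=\log K^{\ast}$, whose polarization is $\log K^{\ast}_{U\times\ov{V}}(z,\ov{w})$ by uniqueness of analytic continuation) against the definition of the representative coordinates, the only discrepancy being the prefactor $\sqrt{g}^{\ov{k}j}(p)$ versus $g^{\ov{k}j}(p)$. Your handling of the polarization step and the interpretation of the normalization factor are both accurate.
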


%-------------------------------------------------------------------------------------------------

We explain how to separate the holomorphic part from the Riemannian exponential map and show that it coincides with the inverse map of the Bochner normal coordinate system. Our exposition follows those of \cite{Demailly1982}, \cite{Demailly1994}, and \cite{kapranov1999}.

%-------------------------------------------------------------------------------------------------

\subsection{The holomorphic exponential map}

Let $M$ be a real-analytic K\"{a}hler manifold. The construction of the holomorphic exponential map from the Riemannian exponential map $\text{exp}_p:T_pM\rightarrow M$ consists of two steps: {\small\bf (1) complexification}, {\small\bf (2) restriction.}\medskip\\
{\small\bf Step 1. Complexification (Polarization).} Note that the real-analytic manifold $M$ of real dimension $n$ can be embedded to become a totally real submanifold of the complex manifold $\mathbb{C}M$ of complex dimension $n$.

\begin{thm}[Whitney-Bruhat \cite{Whitney1959}]
Every real analytic manifold $M$ can be embedded to become a totally real submanifold of a complex manifold. This embedding is unique in the sense that, if $\iota_1:M\hookrightarrow\mathbb{C}M_1$ and $\iota_2:M\hookrightarrow\mathbb{C}M_2$ are such embeddings (to the spaces of same dimension), then there exist neighborhoods $U_1$ and $U_2$ of $M$ in $\mathbb{C}M_1$ and $\mathbb{C}M_2$ respectively, and a biholomorphism $f:U_1\rightarrow U_2$ such that $\iota_2=f\circ\iota_1$.
\end{thm}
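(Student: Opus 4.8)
The plan is to treat existence and uniqueness separately, the common engine in both being the local extension of real analytic maps to holomorphic maps together with the rigidity of holomorphic functions along a totally real submanifold of maximal dimension.

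\textbf{Existence.} First I would fix a real analytic atlas $\{(U_\alpha,\phi_\alpha)\}$ on $M$ with charts $\phi_\alpha\colon U_\alpha\to\mathbb{R}^n\subset\mathbb{C}^n$. Each transition map $\phi_{\beta\alpha}:=\phi_\beta\circ\phi_\alpha^{-1}$ is a real analytic diffeomorphism between open subsets of $\mathbb{R}^n$, so by its local power series it admits a holomorphic extension $\tilde\phi_{\beta\alpha}$ to some open neighborhood of $\phi_\alpha(U_\alpha\cap U_\beta)$ in $\mathbb{C}^n$. On the real locus the cocycle identity $\tilde\phi_{\gamma\beta}\circ\tilde\phi_{\beta\alpha}=\tilde\phi_{\gamma\alpha}$ holds, and since both sides are holomorphic, the identity theorem propagates it to the connected complex overlaps on which all three are defined. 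I would then glue the complex charts $\tilde U_\alpha\subset\mathbb{C}^n$ (neighborhoods of $\phi_\alpha(U_\alpha)$) along the $\tilde\phi_{\beta\alpha}$ to produce $\mathbb{C}M$, into which $M$ embeds as the real locus; since $\mathbb{R}^n$ is totally real in $\mathbb{C}^n$, the image is a totally real submanifold.

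\textbf{Uniqueness.} Given two such complexifications $\iota_1\colon M\hookrightarrow\mathbb{C}M_1$ and $\iota_2\colon M\hookrightarrow\mathbb{C}M_2$, I would extend the real analytic self-identification $\iota_2\circ\iota_1^{-1}$ of $M$ (read through the two embeddings) to a holomorphic map $f$ from a neighborhood $U_1$ of $M$ in $\mathbb{C}M_1$ into $\mathbb{C}M_2$, and likewise extend $\iota_1\circ\iota_2^{-1}$ to a holomorphic map $g$. The compositions $g\circ f$ and $f\circ g$ restrict to the identity along $M$; because $M$ is totally real of maximal real dimension, a holomorphic map that agrees with the identity on $M$ must coincide with the identity on a whole neighborhood. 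Hence $f$ is a biholomorphism near $M$ satisfying $\iota_2=f\circ\iota_1$, which is exactly the claimed uniqueness.

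\textbf{Main obstacle.} The delicate point is the gluing in the existence step: the holomorphic extensions $\tilde\phi_{\beta\alpha}$ are a priori defined only on uncontrolled neighborhoods of the real overlaps, so one must shrink them compatibly — invoking paracompactness of $M$ and an inductive, partition-of-unity style refinement — so that the domains on which the cocycle condition holds remain large enough to carry out the gluing and so that the resulting space is Hausdorff. The uniqueness direction is by contrast comparatively soft, resting only on the extension lemma and the identity principle for totally real submanifolds.
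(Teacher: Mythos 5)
The paper offers no proof of this statement: it is quoted as a classical theorem of Whitney--Bruhat with a citation to \cite{Whitney1959}, so there is no in-paper argument to compare against. Judged on its own, your outline is the standard (and essentially the original) route: complexify the transition maps of a real analytic atlas, glue, and prove uniqueness via the holomorphic extension lemma plus the identity principle along a maximal-dimensional totally real submanifold. The uniqueness half is complete as you state it, and it is worth noting that uniqueness is the only part of the theorem the paper actually leans on (the complexification it uses is the explicit one, $M\hookrightarrow M\times\ov{M}$ via the diagonal, so existence is never in doubt in the application).

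The gap is in the existence half, and you have correctly located it but not closed it. Two points. First, the identity-theorem propagation of the cocycle identity $\tilde\phi_{\gamma\beta}\circ\tilde\phi_{\beta\alpha}=\tilde\phi_{\gamma\alpha}$ only works on connected components of the complex triple overlap that actually meet the real locus $\phi_\alpha(U_\alpha\cap U_\beta\cap U_\gamma)$; components that miss it receive no information from the real identity, so the complex domains must be shrunk until every component does meet it. Second, the Hausdorff problem for the glued space is the genuinely hard step of Whitney--Bruhat's paper: two complexified charts $\tilde U_\alpha$, $\tilde U_\beta$ with $U_\alpha\cap U_\beta=\emptyset$ (or with small real overlap) can still accumulate on each other away from $M$, producing non-separated points. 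The phrase ``partition-of-unity style refinement'' is not the right tool here --- there are no partitions of unity in the real analytic or holomorphic category, and none are needed; what is required is a locally finite refinement together with a quantitative shrinking (controlled, e.g., by a metric or an exhaustion of $M$) ensuring that $\tilde U_\alpha\cap\tilde U_\beta$ is nonempty only when $U_\alpha\cap U_\beta$ is, and that the overlap is contained in the domain of $\tilde\phi_{\beta\alpha}$. Until that shrinking is carried out explicitly, the construction of $\mathbb{C}M$ as a (Hausdorff) complex manifold is not established, so the existence part remains a sketch rather than a proof.
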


Since $M$ is a K\"{a}hler manifold (of course complex manifold) in our setting, we have two embeddings, the complexification $T_pM\hookrightarrow T_p^{\mathbb{C}}M$ and the diagonal embedding $\iota:M \hookrightarrow M\times\overline{M}$ as the embedding in Theorem 3.4. Then, apply the following lemma to the exponential map $\text{exp}_p:T_pM\rightarrow M$.

\begin{lem}
Let $M$ and $N$ be totally real submanifolds of complex manifolds $\mathbb{C}M$ and $\mathbb{C}N$, and $f:M\rightarrow N$ a real-analytic diffeomorphism. Then there are neighborhoods $U$ and $V$ of $M$ and $N$, and a unique holomorphic map $f^{\mathbb{C}}:U\rightarrow V$ extending $f$.
\end{lem}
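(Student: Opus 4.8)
The plan is to build $f^{\mathbb{C}}$ locally by complexifying convergent power series and then to glue the local pieces, letting uniqueness do double duty: it will make the gluing well-defined and will supply the uniqueness clause of the statement itself.

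First I would pass to a standard local model. Since $M$ is a real-analytic, maximally totally real submanifold of $\mathbb{C}M$ (its real dimension equals $\dim_{\mathbb{C}}\mathbb{C}M$), the ambient manifold is locally the complexification of $M$; concretely, around each $p\in M$ there is a holomorphic chart $(z_1,\ldots,z_n)$ of $\mathbb{C}M$ in which $M$ appears as the real slice $\{\mathrm{Im}\,z_j=0\}\cong\mathbb{R}^n\subset\mathbb{C}^n$, and likewise near $f(p)\in N$ a chart $(w_1,\ldots,w_n)$ presenting $N$ as $\mathbb{R}^n$. In these coordinates $f$ becomes a real-analytic map $x\mapsto(f_1(x),\ldots,f_n(x))$ near some $x_0\in\mathbb{R}^n$, each $f_j$ being the sum of a Taylor series $\sum_\alpha c_\alpha(x-x_0)^\alpha$ convergent on a polydisc. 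Replacing the real variable $x$ by the complex variable $z$ in each series yields functions $\hat f_j(z)$ holomorphic on a (possibly smaller) polydisc about $x_0$ in $\mathbb{C}^n$ and agreeing with $f_j$ on $\mathbb{R}^n$, hence a local holomorphic extension $\hat f=(\hat f_1,\ldots,\hat f_n)$ of $f$.

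The second step is an identity principle along the totally real set $M$. If two holomorphic maps agree on $M$, then in the local model their difference is a tuple of holomorphic functions each vanishing on $\mathbb{R}^n$; since the Taylor coefficients of a holomorphic function at a point $x_0\in\mathbb{R}^n$ equal the real partial derivatives of its restriction to $\mathbb{R}^n$ at $x_0$, all these coefficients vanish and the difference vanishes on a full neighborhood of $x_0$. Thus any two holomorphic extensions of $f$ coincide near $M$, which is exactly the asserted uniqueness, and in particular the local extensions $\hat f$ arising from overlapping charts automatically agree on overlaps.

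With uniqueness established, gluing is immediate: the local extensions patch to a single holomorphic map $f^{\mathbb{C}}$ on the union $U$ of the chart domains, an open neighborhood of $M$ in $\mathbb{C}M$. Finally, because $f^{\mathbb{C}}$ is continuous and $f^{\mathbb{C}}|_M=f$ carries $M$ onto $N$, for any prescribed neighborhood $V$ of $N$ we may shrink $U$ toward $M$ so that $f^{\mathbb{C}}(U)\subset V$, completing the construction. I expect the genuine content to lie entirely in the totally real identity principle of the second step, since the local power-series complexification and the ensuing gluing and image control follow routinely once one knows holomorphic maps are determined by their restriction to the maximal totally real submanifold $M$; the only mild care needed is that the individually shrinking extension neighborhoods still cover all of $M$ when $M$ is noncompact, which uniqueness renders harmless by making the patched domain independent of the chart choices.
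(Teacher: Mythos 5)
The paper does not actually prove this lemma: it is quoted as a standard consequence of the Whitney--Bruhat complexification theorem stated just above it, so there is no in-paper argument to compare against. Your proposal supplies the standard proof, and it is essentially correct: flatten $M$ and $N$ locally to $\mathbb{R}^n\subset\mathbb{C}^n$ (legitimate here because, in the paper's setting, $M$ is a \emph{real-analytic, maximally} totally real submanifold of its Whitney--Bruhat complexification --- you are right to make this explicit, since for a totally real submanifold of non-maximal dimension the uniqueness claim would fail, e.g.\ $z_n$ vanishes on $\mathbb{R}^{n-1}\times\{0\}\subset\mathbb{C}^n$), complexify the Taylor series, and invoke the identity principle along $\mathbb{R}^n$ to get both uniqueness of germs along $M$ and consistency on overlaps. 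The one step you treat too briskly is the gluing: two local extensions $\hat f_i$, $\hat f_j$ are only guaranteed to agree on \emph{some} neighborhood of $M\cap U_i\cap U_j$ inside $U_i\cap U_j$, not on all of $U_i\cap U_j$, so "the local extensions patch on the union of the chart domains" is not literally true. One must first pass to a locally finite refinement and shrink each $U_i$ so that every nonempty intersection $U_i'\cap U_j'$ lies inside the open set where $\hat f_i$ and $\hat f_j$ already agree; local finiteness guarantees the shrunken sets still form a neighborhood of $M$ even when $M$ is noncompact. This is a routine paracompactness argument, but it is where the actual work of turning local uniqueness into a global extension happens, and your closing sentence gestures at it without carrying it out. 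With that point filled in, the argument is complete and is exactly the proof one finds in the cited literature.
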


\begin{proof}
See the proof of Lemma 1.2.1 in \cite{Stenzel1990}.
\end{proof}

Denote by $\text{exp}_p^{\mathbb{C}}$ the unique holomorphic extension of $\text{exp}_p$.\medskip\\
{\small\bf Step 2. Restriction.} Use the decomposition $T_p^{\mathbb{C}}M\cong T'_pM\oplus T_p''M$ where $T'_pM$: the holomorphic tangent space and $T''_pM$: the anti-holomorphic tangent space of the given real-analytic K\"{a}hler manifold $M$. Then restrict the complexified map $\text{exp}_p^{\mathbb{C}}$ to $T'_pM$.

\begin{defin}
The restriction map $\text{exp}_p^{\mathbb{C}}\big|_{T'_pM}(\zeta):=\text{exp}_p^{\mathbb{C}}(\zeta,0)$ is called the {\it holomorphic exponential map}.
\end{defin}

We remark that the above definition is the same as the following definition which first appeared in \cite{Demailly1982}.

\begin{defin}
Take the power series expansion of the exponential map of the K\"{a}hler metric $\text{exp}_p:T'_pM\oplus T_p''M\rightarrow M$ and the decomposition
$$
\text{exp}_p(\zeta,\overline{\zeta})=f(\zeta) + g(\zeta,\overline{\zeta}),
$$
on some neighborhood of $0$, where $f$ is holomorphic in $\zeta$ and $g$ is the sum of all monomials in the power series expansion of $\text{exp}_p(\zeta,\overline{\zeta})$ which are not holomorphic in $\zeta$. Then {\it the holomorphic part} of the exponential map at $p$ is defined to be 
$$
\text{exph}_p(\zeta):=f(\zeta).
$$
\end{defin}

%--------------------------------------------------------

\subsection{The Bochner connection}\label{bc}

We present the construction of the holomorphic affine connection, whose affine exponential map is the {\it holomorphic exponential map} $\text{exph}_p$. We also show that $\text{exph}_p$ is the same as the inverse to the Bochner normal coordinate system, using the affine geodesic equations of the connection.

\begin{thm} [Kapranov \cite{kapranov1999}]
There exists a holomorphic affine connection $\nabla^{\mathbb{C}}$ on $T'(M\times\overline{M})$, defined over a neighborhood of $\iota(M)$, whose affine exponential map is $\text{\rm exp}_p^{\mathbb{C}}$. The restriction of $\nabla^{\mathbb{C}}$ to $T'_pM$ is also a holomorphic affine connection, defined over a neighborhood of $p$ in $M$. The affine exponential map of $\nabla^\mathbb{C}|_{T'_pM}$ is $\text{\rm exph}_p$.
\end{thm}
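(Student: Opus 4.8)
The plan is to construct $\nabla^{\mathbb{C}}$ by complexifying the Levi-Civita connection of the real-analytic K\"{a}hler metric and then to exploit the vanishing of the mixed Christoffel symbols---the hallmark of the K\"{a}hler condition---to carry out the restriction to the holomorphic directions. First I would start from the Levi-Civita connection $\nabla$ of the metric $g$ on $M$, whose Christoffel symbols $\Gamma^i_{jk}(z,\ov{z})$ in holomorphic coordinates are real-analytic functions. Identifying the Whitney-Bruhat complexification $\mathbb{C}M$ with a neighborhood of $\iota(M)$ in $M\times\ov{M}$ via the diagonal embedding, the extension lemma yields unique holomorphic functions $\Gamma^i_{jk}(z,\ov{w})$ extending these symbols off the diagonal $\{\ov{w}=\ov{z}\}$. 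Declaring these, together with the conjugate symbols $\Gamma^{\bar i}_{\bar j\bar k}(z,\ov{w})$, to be Christoffel symbols defines $\nabla^{\mathbb{C}}$ on $T'(M\times\ov{M})$ over a neighborhood of $\iota(M)$; the local data patch globally because the transformation rule for Christoffel symbols is real-analytic and hence holomorphically extends.

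Next I would identify the affine exponential map of $\nabla^{\mathbb{C}}$. The geodesic equation $\ddot\gamma^i+\Gamma^i_{jk}\dot\gamma^j\dot\gamma^k=0$ for $\nabla$ is real-analytic, so its solutions depend real-analytically on the initial data and $\text{exp}_p$ is real-analytic. The complexified system, with holomorphic coefficients $\Gamma^i_{jk}(z,\ov{w})$, has solutions depending holomorphically on the (now complex) initial velocity; by uniqueness of holomorphic continuation these restrict along $\iota(M)$ to the real geodesics. Hence the affine exponential map of $\nabla^{\mathbb{C}}$ at $\iota(p)$ coincides with the holomorphic extension $\text{exp}_p^{\mathbb{C}}$.

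For the restriction the K\"{a}hler hypothesis does the decisive work: in holomorphic coordinates the only nonvanishing Christoffel symbols of $\nabla$ are $\Gamma^i_{jk}$ and their conjugates, all mixed symbols vanishing identically, and this persists after holomorphic extension. I would restrict $\nabla^{\mathbb{C}}$ to the complex slice $M\times\{\ov{p}\}\cong M$ and to the holomorphic subbundle $T'M$; the vanishing of the mixed symbols guarantees that $T'M$ is preserved and that the frozen data $\Gamma^i_{jk}(z,\ov{p})$ define a genuine holomorphic affine connection near $p$. For a geodesic issuing from $\iota(p)$ with initial velocity $\zeta\in T'_pM$, the anti-holomorphic velocity component vanishes, so the $\ov{w}$-equation $\ddot{\ov{w}}^i=-\Gamma^{\bar i}_{\bar j\bar k}\dot{\ov{w}}^j\dot{\ov{w}}^k$ with data $\ov{w}(0)=\ov{p}$ and $\dot{\ov{w}}(0)=0$ admits the constant solution $\ov{w}\equiv\ov{p}$; by uniqueness this is the solution, the geodesic stays on the slice, and its $z$-component solves $\ddot z^i+\Gamma^i_{jk}(z,\ov{p})\dot z^j\dot z^k=0$. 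The time-one map is then exactly the holomorphic part $\text{exph}_p$.

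The main obstacle I anticipate is not the decoupling, which is clean once the mixed symbols are known to vanish, but rather making precise the sense in which ``restriction to $T'_pM$'' produces a well-defined object: one must verify that freezing $\ov{w}=\ov{p}$ is independent of the chosen coordinates and that the resulting affine exponential map is literally $\text{exph}_p$, not merely formally equal to it. Reconciling this with the two equivalent descriptions of $\text{exph}_p$ given earlier---the restriction $\text{exp}_p^{\mathbb{C}}(\zeta,0)$ and Demailly's holomorphic-part decomposition $\text{exp}_p(\zeta,\ov{\zeta})=f(\zeta)+g(\zeta,\ov{\zeta})$---is where I would concentrate the most care.
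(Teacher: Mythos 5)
Your proposal is correct and follows essentially the same route as the paper: complexify the Christoffel symbols $\Gamma^j_{kl}(z,\ov{z})=\frac{\partial g_{k\ov{m}}}{\partial z_l}g^{\ov{m}j}$ of the K\"{a}hler (equivalently, for a K\"{a}hler metric, Levi-Civita) connection to $\Gamma^j_{kl}(z,\ov{w})$, identify the affine exponential map with $\text{exp}_p^{\mathbb{C}}$ by uniqueness of the holomorphic extension, and restrict to $T'_pM$ by observing that the $\ov{w}$-geodesic equation with zero initial velocity has the constant solution $\ov{w}\equiv\ov{p}$. The decoupling argument you describe is exactly the content of the lemma the paper proves immediately afterward, so nothing essential is missing.
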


\begin{proof}
Let $\nabla$ be the K\"{a}hler connection, defined by the Christoffel symbols $\Gamma^j_{kl}(z,\overline{z})=\frac{\partial g_{k\overline{m}}(z,\overline{z})}{\partial z_l}g^{\overline{m}j}(z,\overline{z})$. Denote by $\nabla^{\mathbb{C}}$ the analytic continuation (complexification) of $\nabla$. Then $\nabla^{\mathbb{C}}$ is an affine connection, defined by the coefficients of the connection 1-form:
$$
\Gamma^j_{kl}(z,\overline{w})=\frac{\partial g_{k\overline{m}}(z,\overline{w})}{\partial z_l}g^{\overline{m}j}(z,\overline{w}),
$$
where $(z,\overline{w})$ are holomorphic coordinates for $M\times\overline{M}$. Moreover, its affine exponential map is the same as $\text{exp}_p^{\mathbb{C}}$, since the complexification of $\text{exp}_p$ is unique. 

To prove the second statement, take the decomposition
$$
T'_{(p,\overline{p})}(M\times\overline{M})=T'_pM\oplus T'_{\overline{p}}\overline{M},
$$
and restrict $\nabla^\mathbb{C}$ to $T'_pM$. Then this is a holomorphic affine connection on $T'_pM$, defined only in some neighborhood of $p$ in $M$. The affine exponential map of this connection is the holomorphic exponential map $\text{exph}_p$.
\end{proof}

From now on, we denote $\nabla^\mathbb{C}|_{T'_pM}$ by $\nabla^p$, and call it the {\it Bochner connection} at $p$. The following lemma shows the affine geodesic equations for the holomorphic exponential map $\text{exph}_p$.

\begin{lem}
The geodesic of the Bochner connection $\nabla^p$ emanating from $p$ in the initial direction $\zeta\in T'_pM$ satisfies the following system of second order ODE:
\begin{equation}\label{eqn}
\left\{ \begin{array}{l}
\frac{d^2z_{j}(t)}{dt^2}+\Gamma^j_{kl}(z(t),\overline{p})\frac{dz_k(t)}{dt}\frac{dz_l(t)}{dt}=0,\medskip\medskip\\ 
\ \ \ \ \ z(0)=p,\ \ \ \frac{dz_{j}}{dt}(0)=\zeta_j.
\end{array}\right.
\end{equation}
\end{lem}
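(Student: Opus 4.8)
The plan is to obtain the system \eqref{eqn} by reading off the affine geodesic equations of $\nabla^{\mathbb{C}}$ on $M\times\ov{M}$ and then specializing to the initial data that define $\nabla^p$. By construction $\nabla^p=\nabla^{\mathbb{C}}|_{T'_pM}$, and its affine exponential map is $\text{exph}_p(\zeta)=\text{exp}_p^{\mathbb{C}}(\zeta,0)$; hence the geodesics of $\nabla^p$ emanating from $p$ in the direction $\zeta$ are precisely the $M$-components of the $\nabla^{\mathbb{C}}$-geodesics on $M\times\ov{M}$ issuing from $(p,\ov{p})$ with the purely holomorphic initial velocity $(\zeta,0)$. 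Writing such a curve as $(z(t),\ov{w}(t))$ in the coordinates $(z,\ov{w})$, the affine geodesic equation of a connection with coefficients $\Gamma^j_{kl}(z,\ov{w})$ reads $\frac{d^2 z_j}{dt^2}+\Gamma^j_{kl}\frac{dz_k}{dt}\frac{dz_l}{dt}+(\text{mixed terms})=0$, together with the analogous equation governing the $\ov{w}_j$.

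The first step is to observe that $\nabla^{\mathbb{C}}$, being the analytic continuation of the K\"ahler connection $\nabla$, inherits the vanishing of all mixed Christoffel symbols: for a K\"ahler metric the only nonzero symbols are $\Gamma^j_{kl}$ and their conjugates $\Gamma^{\ov{j}}_{\ov{k}\ov{l}}$, so the mixed symbols are identically zero as functions on $M$ and their analytic continuations to $M\times\ov{M}$ vanish identically as well. Consequently the $2n$ geodesic equations on $M\times\ov{M}$ split into an independent holomorphic system in the $z_j$ and an anti-holomorphic system in the $\ov{w}_j$. Imposing the initial conditions $\ov{w}(0)=\ov{p}$ and $\frac{d\ov{w}}{dt}(0)=0$ that come from restricting to $T'_pM$, the constant curve $\ov{w}(t)\equiv\ov{p}$ solves the anti-holomorphic subsystem, and by uniqueness of solutions of the geodesic ODE—the coefficients being holomorphic, hence locally Lipschitz—it is the only solution. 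Substituting $\ov{w}(t)\equiv\ov{p}$ into the holomorphic subsystem leaves exactly $\frac{d^2 z_j}{dt^2}+\Gamma^j_{kl}(z(t),\ov{p})\frac{dz_k}{dt}\frac{dz_l}{dt}=0$ with $z(0)=p$ and $\frac{dz_j}{dt}(0)=\zeta_j$, which is the system \eqref{eqn}.

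The main point requiring care is the vanishing of the mixed Christoffel symbols after complexification, since this is exactly what keeps the anti-holomorphic coordinate frozen at $\ov{p}$ and thereby makes $\Gamma^j_{kl}(z,\ov{p})$—rather than $\Gamma^j_{kl}(z,\ov{z})$—the coefficients governing the holomorphic geodesic. I would justify this directly from the K\"ahler condition in the real-analytic coordinates and then invoke the uniqueness clause of the complexification lemma to transport the identity to a neighborhood of $\iota(M)$ in $M\times\ov{M}$. With that in hand, the remainder is the standard fact that the geodesic equation of an affine connection is determined by its connection coefficients, so no further computation is needed.
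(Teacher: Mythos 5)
Your proposal is correct and follows essentially the same route as the paper: the paper likewise writes the $\nabla^{\mathbb{C}}$-geodesic equations on $M\times\ov{M}$ as two decoupled systems in $z$ and $\ov{w}$ (the splitting coming from the K\"ahler form of the Christoffel symbols) and then sets the anti-holomorphic initial velocity to zero so that $\ov{w}(t)\equiv\ov{p}$, reducing to \eqref{eqn}. Your added justifications---that the mixed symbols remain zero after analytic continuation and that ODE uniqueness forces the constant solution---are details the paper leaves implicit, but the argument is the same.
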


\begin{proof}
The curve $\text{exp}_p^{\mathbb{C}}(\zeta t,\overline{\xi} t)$, constructed by the affine exponential map of $\nabla^\mathbb{C}$ satisfies
\begin{equation}\label{eqn1}
\left\{ \begin{array}{l}
\frac{d^2z_j(t)}{dt^2}+\Gamma^j_{kl}(z(t),\overline{w}(t))\frac{dz_k(t)}{dt}\frac{dz_l(t)}{dt}=0,\medskip\medskip\\ 
\ \ \ \ \ z(0)=p,\ \ \ \frac{dz_{j}}{dt}(0)=\zeta_j,
\end{array}\right.
\end{equation}
and
\begin{equation}\label{eqn2}
\left\{ \begin{array}{l}
\frac{d^2\overline{w_j}(t)}{dt^2}+\Gamma^{\overline{j}}_{\overline{k}\overline{l}}(z(t),\overline{w}(t))\frac{d\overline{w_k}(t)}{dt}\frac{d\overline{w_l}(t)}{dt}=0,\medskip\medskip\\ 
\ \ \ \ \ \overline{w}(0)=\overline{p},\ \ \ \frac{d\overline{w_j}}{dt}(0)=\overline{\xi_j},
\end{array}\right.
\end{equation}
where $(\zeta,\overline{\xi})\in T'_pM\oplus T''_pM=\mathbb{C}T_pM$. It suffices to let $\xi\equiv0$, since the solution of (\ref{eqn2}) becomes the constant map $(w_1,\ldots,w_n)\equiv(p_1,\ldots,p_n)$.
\end{proof}

Using the above lemma, we prove the follwing proposition, which appeared first in \cite{kapranov1999}.

\begin{prop}\label{geo}
The inverse to the holomorphic exponential map at $p$ of the real analytic K\"{a}hler metric is the Bochner normal coordinate system at $p$, up to unitary linear transformations.
\end{prop}

\begin{proof}
Let $\varphi$ be the inverse to the Bochner normal coordinate system and $\tilde{\gamma}(t)$ the curve in $M$ given by $\tilde{\gamma}(t)=\varphi(v t)$ where $v\in\mathbb{C}^n\cong T'_pM$. It is enough to show that $\tilde{\gamma}(t)=(z_1(t),\ldots,z_n(t))$ satisfies (\ref{eqn}). By the definition of the normal coordinates, we obtain $\tilde{\gamma}(0)=\varphi(0)=p$ and 
\begin{equation}\label{atz}
\frac{\partial \zeta_k}{\partial z_l}=g^{\overline{j}k}(p)g_{l\overline{j}}(z,\overline{p}),\ \ \  \frac{\partial z_k}{\partial \zeta_r}=g_{r\overline{\lambda}}(p)g^{\overline{\lambda}k}(z,\overline{p}).
\end{equation}
Since $\frac{\partial \zeta_k}{\partial z_l}\big|_{z=p}=\delta_{lk}$, $\tilde{\gamma}'(0)=(\frac{dz_1}{dt}(0),\ldots,\frac{dz_n}{dt}(0))=(\frac{d\zeta_1}{dt}(0),\ldots,\frac{d\zeta_n}{dt}(0))=v$.
Then the holomorphicity of the Bochner normal coordinates implies that
\begin{eqnarray*}
\frac{d^2z_{j}(t)}{dt^2} & + & \Gamma^j_{kl}(\tilde{\gamma}(t),\overline{p})\frac{dz_k(t)}{dt}\frac{dz_l(t)}{dt}\\
& = & \frac{\partial^2z_j}{\partial \zeta_r \partial \zeta_s}\frac{d\zeta_r}{dt}\frac{d\zeta_s}{dt}+\Gamma^j_{kl}(\tilde{\gamma}(t),\overline{p})\frac{\partial z_k}{\partial \zeta_r}\frac{d\zeta_r}{dt}\frac{\partial z_l}{\partial \zeta_s}\frac{d\zeta_s}{dt} \\
& = & \Big\{\frac{\partial^2z_j}{\partial \zeta_r \partial \zeta_s}+\Gamma^j_{kl}(\tilde{\gamma}(t),\overline{p})\frac{\partial z_k}{\partial \zeta_r}\frac{\partial z_l}{\partial \zeta_s}\Big\}v_rv_s .
\end{eqnarray*}
Thus it suffices to show that the following analytic differential equations hold: 
\begin{equation}
\frac{\partial^2z_j}{\partial \zeta_r \partial \zeta_s}+\Gamma^j_{kl}(\tilde{\gamma}(t),\overline{p})\frac{\partial z_k}{\partial \zeta_r}\frac{\partial z_l}{\partial \zeta_s}=0.
\end{equation}
The matrix equation $d(A\cdot A^{-1})=0$ and the equation (\ref{atz}) yield
\begin{align*}
\frac{\partial^2z_j}{\partial \zeta_r\partial \zeta_s}&=g_{r\overline{\lambda}}(p)\frac{\partial g^{\overline{\lambda}j}(z,\overline{p})}{\partial z_l}\frac{\partial z_l}{\partial \zeta_s}\\
&=-g_{r\overline{\lambda}}(p)g^{\overline{\lambda}k}(z,\overline{p})\frac{\partial g_{k\overline{m}}(z,\overline{p})}{\partial z_l}g^{\overline{m}j}(z,\overline{p})\frac{\partial z_l}{\partial \zeta_s}\\
&=-\frac{\partial z_k}{\partial \zeta_r}\frac{\partial g_{k\overline{m}}(z,\overline{p})}{\partial z_l}g^{\overline{m}j}(z,\overline{p})\frac{\partial z_l}{\partial \zeta_s}.
\end{align*}
Therefore,  we arrive at 
$$
\frac{\partial^2z_j}{\partial \zeta_r \partial \zeta_s}+\Gamma^j_{kl}(\tilde{\gamma}(t),\overline{p})\frac{\partial z_k}{\partial \zeta_r}\frac{\partial z_l}{\partial \zeta_s}=0.
$$
\end{proof}

\begin{rem}
For a bounded domain with the Bergman metric, it is known that the above analytic equations hold for the Bergman representative map \cite{Lu1984}. This is, of course, strongly analogous to the analysis of flows of vector fields in the context of Riemannian geometry.
\end{rem}

%--------------------------------------------------------------------------------------------
%---------------------------------------------------------------------------------------------

\section{The Bochner connection on a manifold with the Bergman metric} \label{bb}

%--------------------------------------------------------------------------------------------------

Let $M$ be a complex manifold with the Bergman metric and $p$ a point in $M$. Since the Bergman metric is a real-analytic K\"{a}hler metric, the Bochner connection can be constructed in an open neighborhood of $p$ as in Section \ref{bc}. On the other hand, we show that the Bochner connection actually extends to the whole manifold except possibly for an analytic variety.
%--------------------------------------------------------------------------------------------------

\subsection{The extended Bochner connection}

Suppose that $M$ is a complex manifold which possesses the Bergman metric. Recall that in a local coordinate system $(U\times \overline{V},(z_1,\ldots,z_n,\overline{w_1},\ldots,\overline{w_n}))$ for $M\times\overline{M}$, the Bergman kernel form is
$$
K(z,\overline{w})=K^{\ast}_{U\times\overline{V}}(z,\overline{w})dz_1\wedge\cdots\wedge dz_n\wedge d\overline{w_1}\wedge\cdots\wedge d\overline{w_n},
$$
where $K^{\ast}_{U\times\overline{V}}(z,\overline{w})$ is a well-defined function on $U\times\overline{V}$. Define the tensor on $M\times\overline{M}$ by
$$
G(z,\overline{w}):=\sum\limits_{j,k=1}^n g_{j\overline{k}}(z,\overline{w})dz_j\otimes d\overline{w_k}=\sum\limits_{j,k=1}^n \frac{\partial^2\log K^{\ast}_{U\times V}(z,\overline{w})}{\partial z_j\partial\overline{w_k}}dz_j\otimes d\overline{w_k}.
$$
Let $(\widetilde{U}\times \overline{\widetilde{V}},(\widetilde{z}_1,\ldots,\widetilde{z}_n,\overline{\widetilde{w}_1},\ldots,\overline{\widetilde{w}_n}))$ be another coordinate system. Then, in $(U\times\overline{V})\cap(\widetilde{U}\times\overline{\widetilde{V}})$, the following transformation formula
\begin{equation}\label{trk}
K^{\ast}_{U\times\overline{V}}(z,\overline{w})=K^{\ast}_{\widetilde{U}\times\overline{\widetilde{V}}}(\widetilde{z},\overline{\widetilde{w}}) \det{J_U^{\widetilde{U}}(z)}\overline{\det{J_V^{\widetilde{V}}(w)}}
\end{equation}
holds, where $J_U^{\widetilde{U}}(z)=\big(\frac{\partial \widetilde{z}_k}{\partial z_j}\big)_{n\times n}$ and $J_V^{\widetilde{V}}(w)=\big(\frac{\partial \widetilde{w}_k}{\partial w_j}\big)_{n\times n}$. In terms of matrices,
\begin{equation}\label{trg}
G_{U\times\overline{V}}(z,\overline{w})=J_U^{\widetilde{U}}(z)\cdot\widetilde{G}_{\widetilde{U}\times \overline{\widetilde{V}}}(\widetilde{z},\overline{\widetilde{w}})\cdot\overline{J_V^{\widetilde{V}}(w)}^t
\end{equation}
where $G_{U\times \overline{V}}(z,\overline{w})=\Big[\frac{\partial^2\log K^{\ast}_{U\times \overline{V}}(z,\overline{w})}{\partial z_j\partial\overline{w_k}}\Big]_{n\times n}$, $\widetilde{G}_{\widetilde{U}\times \overline{\widetilde{V}}}(\widetilde{z},\overline{\widetilde{w}})=\Big[\frac{\partial^2\log K^{\ast}_{\widetilde{U}\times\overline{\widetilde{V}}}(\widetilde{z},\overline{\widetilde{w}})}{\partial \widetilde{z}_j\partial\overline{\widetilde{w}_k}}\Big]_{n\times n}$.\\
Given a point $\overline{p}\in\overline{M}$, define the analytic varieties
$$
Z^p_0:=\{z\in M: K(z,\overline{p})=0\} \text{\ \ and\ \ } Z^p_1:=\{z\in M-Z^p_0: \det(G(z,\overline{p}))=0\}.
$$

\begin{lem}
If $f:M\rightarrow \widetilde{M}$ is a biholomorphism with $q=f(p)$, then it satisfies\smallskip
\begin{enumerate}
\item [(1)] $f(Z^p_0)=\widetilde{Z}^q_0$,\smallskip
\item [(2)] $f(Z^p_1)=\widetilde{Z}^q_1$,\smallskip
\item [(3)] $f(M^p)=\widetilde{M}^q$,
\end{enumerate}
where $M^p:=M-(Z^p_0\cup Z^p_1)$ and $\widetilde{M}^q:=\widetilde{M}-(\widetilde{Z}^q_0\cup\widetilde{Z}^q_1)$.
\end{lem}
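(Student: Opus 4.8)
The plan is to exploit the biholomorphic invariance of the Bergman kernel form, which yields a transformation formula for $f$ entirely parallel to the coordinate-change formulas (\ref{trk}) and (\ref{trg}). The three assertions will then all reduce to the simple fact that the Jacobian determinant of a biholomorphism never vanishes.

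First I would record the key input. Writing $F:=f\times\ov{f}\colon M\times\ov{M}\to\widetilde{M}\times\ov{\widetilde{M}}$, one has $F^{\ast}\widetilde{K}=K$. This holds because $f^{\ast}$ is a surjective $L^2$-isometry of $A^2(\widetilde{M})$ onto $A^2(M)$ (change of variables under a biholomorphism preserves $\big|\int f\wedge\ov{f}\big|$), so it carries a complete orthonormal basis $\{\widetilde{\phi}_j\}$ to a complete orthonormal basis $\{f^{\ast}\widetilde{\phi}_j\}$; since the kernel form is independent of the chosen basis, the series defining $K$ and the pullback of the series defining $\widetilde{K}$ agree term by term.

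Next I would pass to coordinates near $p$ and near $q=f(p)$, writing the holomorphic Jacobian $J_f(z)=\big(\partial f_k/\partial z_j\big)$. On coefficient functions the identity $F^{\ast}\widetilde{K}=K$ reads
\[
K^{\ast}(z,\ov{w})=\widetilde{K}^{\ast}\big(f(z),\ov{f(w)}\big)\,\det J_f(z)\,\ov{\det J_f(w)},
\]
exactly as in (\ref{trk}). Applying $\partial^2/\partial z_j\partial\ov{w_k}$ to the logarithm annihilates the holomorphic summand $\log\det J_f(z)$ and the antiholomorphic summand $\ov{\log\det J_f(w)}$ (so any branch ambiguity of the logarithm is irrelevant), and the chain rule gives the matrix identity
\[
G(z,\ov{w})=J_f(z)\,\widetilde{G}\big(f(z),\ov{f(w)}\big)\,\ov{J_f(w)}^{\,t},
\]
the analogue of (\ref{trg}). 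Specializing to $w=p$, so that $f(w)=q$, yields
\[
K^{\ast}(z,\ov{p})=\widetilde{K}^{\ast}(f(z),\ov{q})\,\det J_f(z)\,\ov{\det J_f(p)},
\]
and, after taking determinants of the matrix identity,
\[
\det G(z,\ov{p})=\det J_f(z)\cdot\det\widetilde{G}(f(z),\ov{q})\cdot\ov{\det J_f(p)}.
\]

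Finally I would read off the three claims. Since $f$ is biholomorphic, $\det J_f$ is a nowhere-vanishing holomorphic function, so the scalar factors $\det J_f(z)$ and $\ov{\det J_f(p)}$ are nonzero; these also guarantee that the loci $Z^p_0$ and $Z^p_1$ are coordinate-independent. Hence $K^{\ast}(z,\ov{p})=0$ if and only if $\widetilde{K}^{\ast}(f(z),\ov{q})=0$, which is $(1)$; and on $M-Z^p_0$ the determinant identity gives $\det G(z,\ov{p})=0$ if and only if $\det\widetilde{G}(f(z),\ov{q})=0$, which is $(2)$. Statement $(3)$ is then purely formal, as $f$ is a bijection with $f(Z^p_0)=\widetilde{Z}^q_0$ and $f(Z^p_1)=\widetilde{Z}^q_1$, whence $f(M^p)=\widetilde{M}^q$. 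The only genuinely substantive point, and thus the main obstacle, is establishing $F^{\ast}\widetilde{K}=K$ cleanly at the level of the intrinsic $2n$-form; once that invariance is in hand, everything reduces to the nonvanishing of the Jacobian determinant and is routine.
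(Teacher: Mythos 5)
Your proposal is correct and follows essentially the same route as the paper, which simply invokes the transformation formulae (\ref{trk}) and (\ref{trg}) to conclude that the sets $Z^p_0$, $Z^p_1$ are well-defined and biholomorphically invariant; you have merely written out the underlying invariance $F^{\ast}\widetilde{K}=K$ of the kernel form and the resulting Jacobian-factor identities explicitly. The extra detail (in particular the determinant computation and the restriction to $M-Z^p_0$ before comparing $\det G$) is a faithful expansion of the paper's one-line argument rather than a different method.
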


\begin{proof}
The transformation formulae (\ref{trk}) and (\ref{trg}) prove that these sets are well-defined and invariant under biholomorphisms.
\end{proof}

Let $T'M^p$ be the holomorphic tangent bundle over $M^p$. Then,

\begin{thm}\label{connection}
There exists a holomorphic affine connection $\nabla^p$ on $T'M^p$ satisfying:
\begin{enumerate}
\item [(1)] $\nabla^p$ is locally flat, i.e. the torsion and curvature of $\nabla^p$ are zero.\medskip
\item [(2)] {\rm(\ref{eqn})} are the affine geodesic equations for $\nabla^p$.\medskip
\item [(3)] $f_{\ast}(\nabla^p_XY)=\nabla^q_{\widetilde{X}}{\widetilde{Y}}$  for all $X,Y\in T'M^p$ where $\widetilde{X}=f_{\ast}(X), \widetilde{Y}=f_{\ast}(Y)$ and $f$ is the same as in the preceding lemma.
\end{enumerate}
\end{thm}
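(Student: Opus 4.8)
The plan is to define $\nabla^p$ chartwise by the coefficients $\Gamma^j_{kl}(z,\ov p)=\frac{\partial g_{k\ov m}(z,\ov p)}{\partial z_l}\,g^{\ov m j}(z,\ov p)$ obtained by freezing the anti-holomorphic variable of $\nabla^{\mathbb C}$ at $\ov p$, and then to verify (1)--(3) in turn. The first task is to confirm that these locally defined symbols are defined \emph{exactly} on $M^p$ and patch into a global connection on $T'M^p$. They are holomorphic in $z$ wherever $K(z,\ov p)\neq 0$ (so that $g_{k\ov m}(z,\ov p)$ makes sense from $\log K^{\ast}$) and $\det G(z,\ov p)\neq 0$ (so that $g^{\ov m j}(z,\ov p)$ exists), that is, on $M^p=M-(Z^p_0\cup Z^p_1)$. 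For the patching I would note that $\nabla^p$ is by construction the restriction of the globally defined holomorphic connection $\nabla^{\mathbb C}$ to the slice $\ov w=\ov p$; since the transformation law for a holomorphic affine connection involves only the holomorphic Jacobian in $z$, freezing $\ov w=\ov p$ is compatible with holomorphic coordinate changes on $M$, and the transformation formula (\ref{trg}) shows that $\Gamma^j_{kl}(z,\ov p)$ transforms as connection coefficients across overlaps. Hence $\nabla^p$ is a well-defined holomorphic affine connection on $T'M^p$.

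Property (2) and the torsion-freeness half of (1) are then immediate. The affine geodesic equation of a connection with coefficients $\Gamma^j_{kl}(z,\ov p)$ is by definition (\ref{eqn}), giving (2). For torsion, observe that $\frac{\partial g_{k\ov m}(z,\ov p)}{\partial z_l}$ equals the third derivative $\frac{\partial^3\log K^{\ast}}{\partial z_k\,\partial z_l\,\partial\ov w_m}$ evaluated at $\ov w=\ov p$, which is symmetric in $k,l$; hence $\Gamma^j_{kl}=\Gamma^j_{lk}$ and the torsion tensor vanishes identically on $M^p$.

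The heart of the argument is the vanishing of the curvature in (1), which I would prove by exhibiting local flat coordinates at every point of $M^p$. The representative map $\hbox{rep}_p$ is holomorphic on $M-Z^p_0$, and by (\ref{atz}) its Jacobian $\frac{\partial\zeta_k}{\partial z_l}$ is, up to the normalization factor, $g^{\ov j k}(p)\,g_{l\ov j}(z,\ov p)$, which is invertible precisely when $\det G(z,\ov p)\neq 0$; thus $\hbox{rep}_p$ is a local biholomorphism at each point of $M^p$. The purely algebraic computation in the proof of Proposition \ref{geo} --- which uses only (\ref{atz}), the matrix identity $d(A\cdot A^{-1})=0$, and the form of $\Gamma^j_{kl}$ --- is valid at every $z\in M^p$, and it yields exactly
\begin{equation*}
\frac{\partial^2 z_j}{\partial\zeta_r\partial\zeta_s}+\Gamma^j_{kl}(z,\ov p)\frac{\partial z_k}{\partial\zeta_r}\frac{\partial z_l}{\partial\zeta_s}=0,
\end{equation*}
i.e. the transformed Christoffel symbols of $\nabla^p$ vanish in the $\zeta$-coordinates. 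A connection whose coefficients vanish identically in a chart is flat there, so covering $M^p$ by the charts obtained from $\hbox{rep}_p$ near each point gives curvature $\equiv 0$ on all of $M^p$.

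Finally, for the equivariance (3), I would invoke that every biholomorphism of manifolds carrying the Bergman metric is a Bergman isometry, so $f$ carries the K\"ahler metric connection of $M$ to that of $\widetilde M$, their complexifications $\nabla^{\mathbb C}$ are intertwined, and restricting to the compatible slices $\ov w=\ov p$ and $\ov w=\ov q$ with $q=f(p)$ gives $f_{\ast}(\nabla^p_X Y)=\nabla^q_{\widetilde X}\widetilde Y$; the preceding lemma guarantees $f(M^p)=\widetilde M^q$, so the identity is asserted over the correct domain. The main obstacle I anticipate is not a single computation but the care needed in the first step: confirming that freezing the anti-holomorphic slice is compatible with coordinate changes, and that the flat-coordinate identity of Proposition \ref{geo}, originally derived near $p$, genuinely persists over the entire $M^p$ once $\det G(z,\ov p)\neq 0$ is the only remaining obstruction.
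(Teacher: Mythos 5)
Your construction (freezing the anti-holomorphic slot at $\ov{p}$, checking via (\ref{trg}) that the coefficients patch, and reading off (2) and torsion-freeness from the symmetry of $\frac{\partial^3\log K^{\ast}}{\partial z_k\partial z_l\partial\ov{w}_m}$) matches the paper's, but you take genuinely different routes on the two substantive points. For the vanishing of curvature, the paper stays with the connection matrix $\omega=\partial G\cdot G^{-1}$ and computes in two lines that $d\omega-\omega\wedge\omega=\partial(\partial G\cdot G^{-1})+\partial G\wedge\partial G^{-1}\cdot G\cdot G^{-1}=0$, using only that $G(z,\ov{p})$ is holomorphic and invertible on $M^p$; you instead exhibit flat coordinates, noting that the algebraic computation of Proposition \ref{geo} depends only on (\ref{atz}) and the identity $d(A\cdot A^{-1})=0$ and hence kills the Christoffel symbols in the $\hbox{\rm rep}_p$-chart near every point of $M^p$. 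Both are valid; the paper's is shorter and self-contained, while yours is conceptually illuminating (it essentially produces the affine atlas the paper introduces afterwards) at the cost of importing Proposition \ref{geo} and verifying, as you correctly do, that its hypotheses persist wherever $\det G(z,\ov{p})\neq 0$. For (3), the paper derives the transformation rule (\ref{trc}), $\omega\cdot J=\partial J+J\cdot\widetilde{\omega}$, directly from (\ref{trg}) and gets equivariance globally in one stroke; you route through the K\"ahler metric connection, the uniqueness of polarization, and restriction to the slices $\ov{w}=\ov{p}$ and $\ov{w}=\ov{q}$. That argument is sound but needs one more sentence: $\nabla^{\mathbb{C}}$ is defined only near the diagonal, so restricting its intertwining to the slice gives $f_{\ast}\nabla^p=\nabla^q$ only near $p$ at first; since the difference of two holomorphic affine connections is a holomorphic tensor and $M^p$ is connected (the complement of a proper analytic subvariety in a connected manifold), the identity theorem propagates the equality to all of $M^p$. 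With that addendum your proof is complete.
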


\begin{proof}
Define the connection 1-forms as follows: Note that $G:=G_{U\times\overline{V}}(z,\overline{p})$ is an invertible holomorphic $(n\times n)$-matrix on $U\cap M^p$ so that $G^{-1}$ is well-defined on $U\cap M^p$. Define the $(n\times n)$-matrix $\omega$ of holomorphic 1-forms by $\omega:=\partial G\cdot G^{-1}$, locally defined on $U\cap M^p$. In other words, 
$$
\omega_i^j(z)=\Gamma_{ik}^j(z,\overline{p})dz_k=\frac{\partial g_{i\overline{m}}(z,\overline{p})}{\partial z_k}g^{\overline{m}j}(z,\overline{p})dz_k.
$$
Since $\partial G=\omega\cdot G$, the transformation formula (\ref{trg}) yields the \textit{transformation rule for connection 1-forms}: 
\begin{equation}\label{trc}
\omega\cdot J=\partial J+J\cdot\widetilde{\omega}.
\end{equation}

To show (1), observe that $\nabla^p$ is torsion-free, because $\frac{\partial}{\partial z_k}g_{i\overline{j}}=\frac{\partial}{\partial z_i}g_{k\overline{j}}$. Moreover, its curvature form $\Omega:=d\omega-\omega\wedge\omega$ is also zero, because $G:=(g_{j\overline{k}}(z,\overline{p}))$ is holomorphic. More precisely,
\begin{align*}
&d(\partial G\cdot G^{-1})-(\partial G\cdot G^{-1})\wedge(\partial G\cdot G^{-1})\\
=&\partial(\partial G\cdot G^{-1})+\partial G\wedge\partial G^{-1}\cdot G\cdot G^{-1}\\
=&-\partial G\wedge\partial G^{-1}+\partial G\wedge\partial G^{-1}=0.
\end{align*}

Now, (2) follows from the construction, and (3) follows by (\ref{trc}).
\end{proof}

\begin{rem}\label{remc}
The last statement in Theorem \ref{connection} implies the $\mathbb{C}$-linearity of the Bergman representative coordinates as follows: Since geodesics are straight lines in this coordinate system, $\hbox{\rm rep}_{f(p)}\circ f\circ \hbox{\rm rep}_p^{-1}$ maps straight lines to straight lines. Thus it is $\mathbb{R}$-linear. Since the representative map is holomorphic, this is $\mathbb{C}$-linear. This is the geometric proof of Theorem \ref{clin}, promised in Section \ref{br}.
\end{rem}

It is possible to find the formula of the inverse to the affine exponential map of $\nabla^p$ not only at $p$ but also at an arbitrary point $q\in M^p$. The proof is the same as that of Theorem \ref{geo}.

\begin{prop}
Denote by $\text{\rm exph}_q$ the affine exponential map of $\nabla^p$ at $q$. Let $(\zeta_1,\ldots,\zeta_n)$ be the coordinate system for the holomorphic tangent space at $q$, $T'_qM^p$. Then, in the local coordinate neighborhood $(U,(z_1,\ldots,z_n))$ containing $q$, 
$$
\text{\rm exph}_q^{-1}(z)=(\zeta_1(z),\ldots,\zeta_n(z)),
$$
where:
$$
\zeta_j(z)=\sqrt{g}^{\overline{k}j}(q,\overline{p})\Big\{\frac{\partial}{\partial \overline{w_k}}\Big|_{w=p}\log K^{\ast}_{U\times\overline{V}}(z,\overline{w})-\frac{\partial}{\partial \overline{w_k}}\Big|_{w=p}\log K^{\ast}_{U\times\overline{V}}(q,\overline{w})\Big\}.
$$
\end{prop}

\begin{rem}
The above proposition implies that ${\hbox{\rm exph}_q}^{-1}$ is a linear transform of ${\hbox{\rm exph}_p}^{-1}$. Therefore $M^p$ can be covered by copies (by linear maps) of the representative coordinates. This can be explained by the concept of affine structures in \cite{matsushima1968}. This concept will be introduced in the following subsection.
\end{rem}
\begin{rem}
For a real-analytic K\"{a}hler manifold, it is known that such an affine structure exists on a local neighborhood of a given point [24].
\end{rem}

%----------------------------------------------------------------

\subsection{Affine structure of $M^p$}

The proof of Theorem \ref{geo} also implies the following

\begin{prop}
Let $U$ be a local neighborhood of $p$ and $V$ a local neighborhood of $0$ such that ${\hbox{\rm exph}_p}^{-1}:U\rightarrow V$ is biholomorphic. Take any straight line $l$ in $V$ (not necessarily passing through $p$). Then ${\hbox{\rm exph}_p(l)}$ is a geodesic of $\nabla^p$.
\end{prop}

This proposition follows immediately from the affine structures in the below:

\begin{defin}
Let $X$ be a complex manifold of dimension $n$ and $\mathcal{M}=\{U_i,\phi_i\}_{i\in I}$ the maximal atlas. A subset $\mathcal{A}=\{U_j,\phi_j\}_{j\in J},J\subset I,$ of $\mathcal{M}$ is called an {\it affine atlas} of $X$ if all transition maps are complex affine transformation of $\mathbb{C}^n$. We say that each maximal affine atlas defines a complex {\it affine structure} of $X$.
\end{defin}

\begin{thm} [Gunning \cite{Gunning1967}, Matsushima \cite{matsushima1968}]
There is a one-to-one correspondence between the set of all complex affine structures on a complex manifold $X$ and the set of all locally flat holomorphic affine connections on $X$.
\end{thm}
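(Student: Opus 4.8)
The plan is to exhibit maps in both directions and check that they are mutually inverse, the whole argument turning on the transformation rule for the Christoffel symbols $\Gamma^k_{ij}$ under a holomorphic change of coordinates $\tilde{z}=\tilde{z}(z)$,
$$
\tilde{\Gamma}^k_{ij}=\frac{\partial\tilde{z}_k}{\partial z_c}\frac{\partial z_a}{\partial\tilde{z}_i}\frac{\partial z_b}{\partial\tilde{z}_j}\Gamma^c_{ab}+\frac{\partial\tilde{z}_k}{\partial z_c}\frac{\partial^2z_c}{\partial\tilde{z}_i\partial\tilde{z}_j}.
$$
Since the Jacobian is invertible, the inhomogeneous term vanishes identically if and only if every second derivative $\partial^2z_c/\partial\tilde{z}_i\partial\tilde{z}_j$ vanishes, that is, if and only if the transition map is complex affine. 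This single equivalence is what links the two notions.

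First I would pass from an affine structure to a connection. Given an affine atlas $\mathcal{A}=\{U_j,\phi_j\}$, I declare $\nabla$ to be trivial in each affine chart, setting $\Gamma^k_{ij}\equiv 0$. Because the transition maps are affine, their second derivatives vanish and the inhomogeneous term above is zero, so the prescription $\Gamma\equiv 0$ agrees on overlaps; hence $\nabla$ is a globally well-defined holomorphic affine connection. In these coordinates its connection $1$-form $\omega$ vanishes, so the torsion (measured by the symmetry of the $\Gamma$'s) and the curvature $\Omega=d\omega-\omega\wedge\omega$ are both zero, and $\nabla$ is locally flat.

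Conversely, starting from a locally flat holomorphic affine connection $\nabla$, I would build an affine structure by producing, near each point, a \emph{flat} holomorphic coordinate system in which all $\Gamma^k_{ij}$ vanish. Here both hypotheses are used: vanishing curvature is precisely the integrability condition for the overdetermined linear system $\nabla e_i=0$, so the holomorphic Frobenius theorem yields a local frame of parallel holomorphic vector fields $e_1,\ldots,e_n$; vanishing torsion then gives $[e_i,e_j]=\nabla_{e_i}e_j-\nabla_{e_j}e_i-T(e_i,e_j)=0$, so the commuting frame integrates to holomorphic coordinates with $e_i=\partial/\partial z_i$, in which $\nabla_{\partial_i}\partial_j=0$ and hence $\Gamma^k_{ij}=0$. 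By the transformation rule, any two flat charts differ by a map with vanishing second derivatives, i.e. by a complex affine transformation, so the totality of flat charts is an affine atlas.

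I expect the existence of the flat coordinate system to be the main obstacle: one must first solve $\nabla e_i=0$ and then upgrade the resulting parallelism to an honest coordinate frame. The first step is governed by $\Omega=0$ through holomorphic Frobenius, and the second step is exactly where the torsion-free condition is indispensable, for only then do the parallel fields commute. Once this is in hand, checking that the two constructions are mutually inverse is routine: the flat charts of the trivial connection attached to an affine structure are the original affine charts up to affine change, and the trivial connection attached to the flat-chart atlas of $\nabla$ has vanishing $\Gamma$'s in those charts and so equals $\nabla$. This establishes the bijection.
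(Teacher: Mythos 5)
The paper does not prove this theorem; it is quoted as a known result of Gunning and Matsushima, so there is no in-paper argument to compare against. Your proposal is the standard and correct proof: the trivial connection in affine charts glues because the inhomogeneous term in the Christoffel transformation rule is controlled by second derivatives of the transition maps, and in the converse direction vanishing curvature gives a local parallel holomorphic frame via holomorphic Frobenius, vanishing torsion makes that frame commute so it integrates to flat coordinates, and the same transformation rule forces transitions between flat charts to be complex affine. The two constructions are visibly inverse, so the argument is complete.
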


\begin{rem}
For any $x,y\in M^p$, $\text{exph}_y\circ\text{exph}_x^{-1}$ is an affine transformation of $\mathbb{C}^n$. Thus $M^p$ has a complex affine structure and the Bochner connection $\nabla^p$ is the corresponding locally flat holomorphic affine connection.
\end{rem}

%----------------------------------------------------------------

\section{Geodesics of the Bochner connection $\nabla^p$}

\subsection{Incompleteness of $\nabla^p$}

The behavior of geodesics of $\nabla^p$ played an important role in the proof of the following theorem, which generalizes Fefferman's extension theorem.

\begin{thm}[Webster \cite{Webster1979}]
Let $f:\Omega\rightarrow\widetilde{\Omega}$ be a biholomorphism between bounded domains with smooth boundaries. Suppose that their Bergman kernels are smooth up to the boundaries. Then $f$ extends smoothly to a dense open subset of $\partial\Omega$.
\end{thm}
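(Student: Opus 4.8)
The plan is to exploit the factorization of $f$ through the Bergman representative coordinates together with the $\mathbb{C}$-linearity of Theorem \ref{clin}. Fix a base point $p\in\Omega$ and set $q=f(p)$. By Theorem \ref{clin} the map $L:=\hbox{rep}_{q}\circ f\circ\hbox{rep}_p^{-1}$ is $\mathbb{C}$-linear, so on the interior we have the identity
\begin{equation}\label{webfac}
f=\hbox{rep}_q^{-1}\circ L\circ\hbox{rep}_p .
\end{equation}
Since $L$ is a fixed linear map, the boundary regularity of $f$ is reduced to that of the two representative maps, which are built explicitly from the Bergman kernel. The first step would be to observe that each coordinate of $\hbox{rep}_p$ is, for fixed $p$, a holomorphic function of $z$ assembled from $\frac{\partial}{\partial\ov{w_k}}\big|_{w=p}\log K^{\ast}(z,\ov{w})$; when the Bergman kernel is smooth up to the boundary these functions extend smoothly to $\ov{\Omega}$ away from the closure of $Z^p_0=\{z:K(z,\ov p)=0\}$, and the same holds for $\hbox{rep}_q$ on $\ov{\widetilde\Omega}$ away from $\ov{Z^q_0}$.

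The second step would be to control the excluded analytic varieties along the boundary. I would show that $\ov{Z^p_0}\cap\partial\Omega$ and $\ov{Z^p_1}\cap\partial\Omega$ are nowhere dense in $\partial\Omega$. Indeed, $K(z,\ov p)$ is holomorphic on $\Omega$, smooth on $\ov\Omega$, and nonzero at $p$; a boundary uniqueness argument for holomorphic functions that are smooth up to a smooth boundary shows it cannot vanish on any open piece of $\partial\Omega$, so $\ov{Z^p_0}\cap\partial\Omega$ is nowhere dense, and applying the same reasoning to $\det(G(z,\ov p))$ disposes of $Z^p_1$. Consequently there is a dense open $\Sigma\subset\partial\Omega$ on which $\hbox{rep}_p$ extends to a smooth map that is a local diffeomorphism up to the boundary: its complex Jacobian, already invertible on $M^p=\Omega-(Z^p_0\cup Z^p_1)$, extends continuously and remains invertible near $\Sigma$. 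The analogous statement produces a dense open $\widetilde\Sigma\subset\partial\widetilde\Omega$ on which $\hbox{rep}_q^{-1}$ is smooth.

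The third step would be to establish the boundary correspondence and invoke (\ref{webfac}). Letting $z\to x_0\in\Sigma$, the right-hand side $L\circ\hbox{rep}_p(z)$ converges to $L\,\hbox{rep}_p(x_0)$; since $\hbox{rep}_q\circ f=L\circ\hbox{rep}_p$ carries $\Omega$ into $\hbox{rep}_q(\widetilde\Omega)$, this limit lies in the closure of $\hbox{rep}_q(\widetilde\Omega)$. For $x_0$ in a dense open subset of $\Sigma$ I would argue that the limit is actually a boundary value $\hbox{rep}_q(y_0)$ with $y_0\in\widetilde\Sigma$, so that $\hbox{rep}_q^{-1}$ is defined and smooth at $L\,\hbox{rep}_p(x_0)$; then (\ref{webfac}) exhibits $f$ near $x_0$ as a composition of three maps each smooth up to the boundary, yielding the desired smooth extension on a dense open subset of $\partial\Omega$.

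The hard part will be the boundary correspondence in the third step: ruling out that the linear image $L\,\hbox{rep}_p(x_0)$ falls onto the excluded varieties $\ov{Z^q_0\cup Z^q_1}$ of the target, and confirming that $f$ pushes a dense open part of $\partial\Omega$ onto a dense open part of $\partial\widetilde\Omega$ where $\hbox{rep}_q^{-1}$ is smooth. This is essentially a properness and boundary-matching problem, since the interior map $f$ does not a priori dictate which boundary points correspond; it is precisely here that smoothness of the Bergman kernel up to the boundary must be used most carefully, in tandem with the equivariance of Theorem \ref{connection}(3), which guarantees that $f$ transports the Bochner connection of $\Omega$ to that of $\widetilde\Omega$ and therefore respects the geometric stratification into the good locus $M^p$ and its complement.
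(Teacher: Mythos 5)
Your proposal follows essentially the same route as the paper, which itself gives only a sketch: the representative coordinates extend smoothly up to the boundary away from the varieties $Z^p_0\cup Z^p_1$ (whose boundary traces are nowhere dense when the kernel is smooth on $\ov{\Omega}$), and $f$, being linear in these coordinates by Theorem \ref{clin}, inherits the extension on a dense open subset of $\partial\Omega$. The boundary-correspondence difficulty you isolate in your third step is exactly what the paper's sketch elides with ``then the result follows,'' so your account is, if anything, more explicit than the paper's about where the real work (carried out in Webster's original article) lies.
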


\begin{rem}
In the proof, he used incompleteness of $\nabla^p$, that is, geodesics of $\nabla^p$ extend through the boundary points.
\end{rem}

%\begin{proof} [Sketch of the proof]
%Since the kernel is smooth up to the boundary, $\nabla^p$ extends to a dense open subset of the boundary in the sense that, it is defined over the boundary except for limit points of the variety $Z^p_0\cup Z^p_1$. Thus the Bochner normal coordinate system, i.e. the Bergman representative coordinate system, extends over the boundary so that the corresponding geodesics extend through the boundary points. Then the result follows.
%\end{proof}

%Notice that the incompleteness of $\nabla^p$ has played an important special role in this proof. 
On the other hand, one might expect to find a suitable K\"{a}hler metric, compatible with $\nabla^p$. But this is impossible because the exponential map of a K\"{a}hler metric is holomorphic if and only if the metric is flat (the Euclidean metric). However, using the image of geodesics under $\text{rep}_p=\text{exph}_p^{-1}$, it is possible to define a distance between two points in a connected manifold $M^p$.

\begin{defin} [Intrinsic distance]
Let $M$ be a connected manifold with the Bergman metric and $\mathcal{A}=\{U_i,\phi_i\}_{i\in I}$ the affine structure of $M^p$, given by the Bochner normal coordinate system. If $x,y\in U_i$ for some $U_i$, then define $\delta^p(x,y)$ to be the euclidean norm of the vector 
$$
\Big(\ldots,\frac{\partial}{\partial \overline{w_k}}\Big|_{w=p}\log K(x,\overline{w})-\frac{\partial}{\partial \overline{w_k}}\Big|_{w=p}\log K(y,\overline{w}),\ldots\Big).
$$
In general, if $x,y$ are arbitrary points in $M^p$, then define the {\it intrinsic distance} by
$$
\text{d}^p(x,y):=\inf\sum\limits_{j=1}^N\delta^p(p_{j-1},p_j),
$$
where the infimum is taken over all possible partitions with $x=p_0,\ldots,p_N=y$.

This is well-defined since there always exists a broken geodesic between two points in the connected affine manifold $(M^p,\nabla^p)$.
\end{defin}

\begin{rem}
For the symmetry $\text{d}^p(x,y)=\text{d}^p(y,x)$, we do not use the normalization factor of the Bochner normal coordinate system. Although $\text{d}^p$ is not a biholomorphic invariant, its finiteness between two points is a biholomorphic invariant. 
\end{rem}

The following theorem shows the relation between the intrinsic distance $\text{d}^p$ and the analytic variety $Z_0^p$.

\begin{thm}
If $q\in Z_0^p$, then there are no geodesics toward $q$ such that the intrinsic distance is finite.
\end{thm}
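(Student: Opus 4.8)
The plan is to reduce the statement to the blow-up of the representative coordinates and then to exploit that geodesics of $\nabla^p$ are straight lines in those coordinates. Write $W_k(z):=\frac{\partial}{\partial\ov{w_k}}\big|_{w=p}\log K^{\ast}_{U\times\ov{V}}(z,\ov{w})$, so that $\delta^p(x,y)=\abs{W(x)-W(y)}$ is exactly the Euclidean distance between the vectors $W(x),W(y)\in\mathbb{C}^n$, and $W$ is, up to the fixed invertible normalization $\sqrt{g}^{\ov{k}j}(p)$ and a translation, the representative map $\mathrm{rep}_p=\mathrm{exph}_p^{-1}$. Since $W$ is a single-valued holomorphic map on $M-Z^p_0$, the triangle inequality applied along any admissible chain of partition points gives $\mathrm{d}^p(x,z)\geq\abs{W(x)-W(z)}$ for all $x,z\in M^p$. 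Hence it suffices to prove that along any geodesic $\gamma(t)$ of $\nabla^p$ with $\gamma(t)\to q$ one has $\abs{W(\gamma(t))}\to\infty$; equivalently, by Theorem \ref{connection}(2) and the flatness of $\nabla^p$, that such a geodesic can reach $q$ only at an infinite value of the affine parameter, since in representative coordinates it is the straight line $W(\gamma(t))=W(\gamma(0))+tV$.

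Next I would isolate the mechanism. In a chart one has $W_k(z)=N_k(z)/K^{\ast}_{U\times\ov{V}}(z,\ov{p})$, where $N_k(z):=\frac{\partial}{\partial\ov{w_k}}\big|_{w=p}K^{\ast}_{U\times\ov{V}}(z,\ov{w})$ is holomorphic. Because $q\in Z^p_0$ means $K^{\ast}_{U\times\ov{V}}(z,\ov{p})\to0$ as $z\to q$, the generic case is immediate: if $N_k(q)\neq0$ for some $k$, then $\abs{W_k(z)}=\abs{N_k(z)}/\abs{K^{\ast}_{U\times\ov{V}}(z,\ov{p})}\to\infty$, so $\abs{W(\gamma(t))}\to\infty$ and the intrinsic length is infinite.

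The remaining, genuinely delicate, case is the degenerate one in which $N_k(q)=0$ for every $k$, i.e. $w\mapsto K^{\ast}(q,\ov{w})$ vanishes to order $\geq2$ at $w=p$; here $W$ is a $0/0$ indeterminate form and need not tend to infinity along arbitrary paths, so the straight-line structure of geodesics must be used. I would argue by contradiction: suppose a geodesic reached $q$ with finite intrinsic length. Then its image in representative coordinates converges to a finite point $\zeta_{\ast}$, and the local inverse $\mathrm{exph}_p$ carries the terminal segment of this line to $\gamma(t)\to q$. Since $q$ is an interior point of $M$, $\gamma$ eventually stays in a fixed coordinate ball, so $\mathrm{exph}_p$ is bounded near $\zeta_{\ast}$; by the Riemann removable-singularity theorem it would extend holomorphically across $\zeta_{\ast}$ with $\mathrm{exph}_p(\zeta_{\ast})=q$. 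Then $\mathrm{rep}_p=\mathrm{exph}_p^{-1}$ would be a local biholomorphism near $q$, forcing $q\in M^p$ and contradicting $q\in Z^p_0$. An equivalent route, which I would use to make the boundedness step unconditional, is to observe that the connection coefficients $g_{j\ov{k}}(z,\ov{p})$ and $\Gamma^j_{kl}(z,\ov{p})$ blow up as $z\to Z^p_0$, so the geodesic decelerates in the $z$-chart and can approach $q$ only as $t\to\infty$, whence again $\abs{W(\gamma(t))}=\abs{W(\gamma(0))+tV}\to\infty$.

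The main obstacle is precisely this degenerate case: one must exclude a finite-affine-parameter approach to $q$ when $\frac{\partial}{\partial\ov{w_k}}\big|_{w=p}K^{\ast}(q,\ov{w})=0$ for all $k$. The difficulty is that $\mathrm{exph}_p$ is only a local inverse of the (generally non-injective) representative map, so justifying the holomorphic extension across $\zeta_{\ast}$ requires boundedness on a full neighborhood of $\zeta_{\ast}$ rather than merely along $\gamma$; equivalently, one must control the possibly degenerate blow-up of the matrix $G(z,\ov{p})$ near $q$ so as to secure an estimate of the form $\|G(z,\ov{p})^{-1}\|=O(\mathrm{dist}(z,q))$, which is what produces the deceleration. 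I expect establishing this uniform blow-up of $G(z,\ov{p})$ near $Z^p_0$ — that is, the completeness of $\nabla^p$ in the direction of $Z^p_0$ — to be the crux of the argument.
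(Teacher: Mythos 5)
Your reduction $\mathrm{d}^p(x,z)\geq\abs{W(x)-W(z)}$ via the triangle inequality is correct, and the generic case ($N_k(q)\neq 0$ for some $k$) is handled properly. But the degenerate case you isolate --- $\frac{\partial}{\partial\ov{w_k}}\big|_{w=p}K^{\ast}(q,\ov{w})=0$ for all $k$ --- is a genuine gap, and neither of your two proposed routes closes it. Route (a) founders on exactly the point you concede: the Riemann removable-singularity argument needs $\mathrm{exph}_p$ bounded on a full neighborhood of $\zeta_{\ast}$, and boundedness along the single geodesic gives you nothing of the sort. Route (b) rests on the unproved assertion that $g_{j\ov{k}}(z,\ov{p})$ and $\Gamma^j_{kl}(z,\ov{p})$ blow up as $z\to Z^p_0$; these coefficients are ratios of holomorphic functions both of which may vanish at $q$, and nothing in the definitions of $Z^p_0$ and $Z^p_1$ forces such a blow-up, so the ``deceleration'' of the geodesic is not established. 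As written, the crux you flag at the end is left entirely open.

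The paper closes the degenerate case with a different and much shorter observation, by working in the $w$-variable rather than the $z$-variable. Fix $z=q$ and consider $w\mapsto\partial_{\ov{w_k}}K(q,\ov{w})/K(q,\ov{w})$: this is the logarithmic derivative of the anti-holomorphic function $K(q,\ov{w})$, which vanishes at $w=p$. Restricted to a one-variable $w_k$-section, a logarithmic derivative has a \emph{simple} pole at any zero, with residue equal to the order of vanishing --- a positive integer no matter how degenerate the zero. So the quantity $\frac{\partial}{\partial\ov{w_k}}\big|_{w=p}\log K(q,\ov{w})$ entering the definition of $\delta^p(\cdot,q)$ is genuinely infinite even when your numerator $N_k(q)$ vanishes; the $0/0$ indeterminacy you worry about in the $z$-variable simply does not arise in the $w$-variable. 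This makes the last term $\delta^p(p_{N-1},q)$ of any admissible chain infinite and finishes the proof without any analysis of the geodesic's affine parameter or of the blow-up of $G(z,\ov{p})$. To repair your argument you should replace your degenerate-case discussion with this logarithmic-derivative computation.
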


\begin{proof}
Suppose that there exists a geodesic toward $q$ such that the intrinsic distance is finite. Then the Bochner normal coordinate system is well-defined at $q$. Note that $\frac{\partial}{\partial\overline{w_k}}K(q,\overline{w})/K(q,\overline{w})$ is an anti-holomorphic function in $w$ for each $k$. Fix an index $k$ and then in the $w_k$-section, this is a function of one variable. Since $K(q,\overline{p})=0$, it has a simple pole at $p$ so that the value is infinite. This implies that $\delta^p$ is infinite, which contradicts the finiteness of the intrinsic distance.
\end{proof}

\subsection{Examples}
Let $\Omega$ be a bounded domain in $\mathbb{C}^n$. Denote the Levi-Civita connection of the Bergman metric of $\Omega$ by $\nabla$. Given a point $p\in\Omega$, denote the Bochner connection by $\nabla^p$. In this section, we show the difference between two connections $\nabla$ and $\nabla^p$ by comparing geodesics of two connections.

\subsubsection{Unit ball}

Let $\mathbb{B}^n$ be the unit ball in $\mathbb{C}^n$. Then the Riemannian exponential map of $\mathbb{B}^n$ at $0$ is
$$
{\rm exp}_0(\zeta)=\frac{\tanh(\abs{\zeta})}{\abs{\zeta}}\cdot\zeta=\Big(\frac{\tanh(\abs{\zeta})}{\abs{\zeta}}\zeta_1,\ldots,\frac{\tanh(\abs{\zeta})}{\abs{\zeta}}\zeta_n\Big),
$$
where $\zeta=(\zeta_1,\ldots,\zeta_n)$ is the standard complex coordinate for $\mathbb{C}^n\cong T_0\mathbb{B}^n$.\\
On the other hand, the representative map at $0$ is
$$
{\rm rep}_0(z)=\sqrt{n+1}\cdot z=(\sqrt{n+1}z_1,\ldots,\sqrt{n+1}z_n).
$$
Therefore, this shows that in the unit ball, geodesics of $\nabla$ and $\nabla^0$ at $0$ move along the same direction but with different speed.

\subsubsection{Unit polydisk}

Let $\mathbb{D}^n$ be the unit polydisk in $\mathbb{C}^n$. Then the Riemannian exponential map of $\mathbb{D}^n$ at $0$ is
$$
{\rm exp}_0(\zeta)=\Big(\frac{\tanh(\abs{\zeta_1})}{\abs{\zeta_1}}\zeta_1,\ldots,\frac{\tanh(\abs{\zeta_n})}{\abs{\zeta_n}}\zeta_n\Big),
$$
where $\zeta$ is the standard complex coordinate for $\mathbb{C}^n\cong T_0D^n$.\\
On the other hand, the representative map at $0$ is
$$
{\rm rep}_0(z)=\sqrt{2}\cdot z=(\sqrt{2}z_1,\ldots,\sqrt{2}z_n).
$$
Unlike the unit ball, this shows that the geodesics of $\nabla$ and $\nabla^0$ at $0$ move along different directions with different speed.

\subsubsection{Skwarczy\'nski's annulus}

Let $A:=\{z\in\mathbb{C}:0<r<|z|<1\}$. Then the Bergman kernel of $A$ is
$$
K(z,\overline{w})=\frac{\wp(\log z\overline{w})+\frac{\eta_1}{\omega_1}}{\pi z\overline{w}},
$$ 
where $\wp$ is the Weierstrass elliptic function with half periods $\omega_1=\log(1/r)$, $\omega_2=\pi i$ and $\eta_1$ is the increment of the Weierstrass zeta function $\zeta$ with respect to $\omega_1$.

The geodesics of the Levi-Civita connection $\nabla$ were already studied in \cite{Herbort1983}. To study the geodesics of the Bochner connection $\nabla^p$, we need the following:

{\small \bf Zeros of $K(z,\overline{p})$:}
Define $h(\lambda):=\wp(\log\lambda)+\frac{\eta_1}{\omega_1}$ on the set $\widetilde{A}:=\{\lambda\in\mathbb{C}:r^2<|\lambda|<1\}$. Then $K(z,\overline{w})=\frac{h(\lambda)}{\pi\lambda}$, where $\lambda=z\overline{w}$. In \cite{Skwarczynski1969}, Skwarczy\'nski proved that
\begin{itemize}
\item $h(\lambda)$ is real, $\forall\lambda\in\mathbb{R}$.\smallskip
\item For $r<e^{-2}$, there exists a point $\lambda\in\widetilde{A}$ such that $h(\lambda)=0$.
\end{itemize}
Later, B{\l}ocki improved the above result as follows (cf. \cite{blocki2010}, Theorem 3.4):
\begin{itemize}
\item $h(-1)=h(-r^2)<0$ and $h(-r)>0$.\smallskip
\item For $r<1$, there exist only two solutions $\lambda_1, \lambda_2$ of the equation $h(\lambda)=0$ in $\widetilde{A}=\{\lambda\in\mathbb{C}:r^2<|\lambda|<1\}$, where $\lambda_2\in(-1,-r)$ and $\lambda_1\in(-r,-r^2)$.
\end{itemize}
Fix a point $p\in A$. The symmetry of the annulus allows one to assume that $p\in(r,1)$ on the real line. Let $\lambda_1^p$ and $\lambda_2^p$ be the solutions of the equation $h(z\overline{p})=0$ satisfying $\lambda_1=\lambda_1^p\overline{p}$ and $\lambda_2=\lambda_2^p\overline{p}$. Since $\lambda_2^p\in(-1/p,-r/p)$, $\lambda_1^p\in(-r/p,-r^2/p)$, the number of elements of the zero set $\{z\in A:K(z,\overline{p})=0\}$ depends on the location of $p$ (For details, see Corollary 2.5 in \cite{Jacobson2012}). For example, if $p$ is close enough to $1$ (or $r$), then there exists only one solution of the equation $K(z,\overline{p})=0$ in the annulus $A$, located in $(-1,-r)$ (or $(-r,-r^2)$).

{\small \bf Geodesics of the Bochner connection $\nabla^p$:}
Recall that the exponential map of the Bochner connection $\nabla^p$ is the inverse map of $\hbox{\rm rep}_p$. A simple computation shows that
$$
\hbox{\rm rep}_p(z)=C_1\cdot\frac{\wp'(\log z\overline{p})}{\wp(\log z\overline{p})+\frac{\eta_1}{\omega_1}}+C_2,
$$ 
where $C_1$ and $C_2$ are constants, and $\wp'$ is the first derivative of the Weierstrass elliptic function $\wp$. Then $\hbox{\rm rep}_p$ is also an elliptic function that shares the same periods with $\wp$ and possesses three simple poles. Since the image of any elliptic function contains $\mathbb{C}$, the pre-image of each straight line consists of three curves (counting multiplicity) in the annulus $\{z\in\mathbb{C}:r^2<|z\overline{p}|<1\}$.

Note that the geodesics of $\nabla^p$ are the images of straight lines by the holomorphic exponential map $\hbox{\rm exph}_p=\hbox{\rm rep}_p^{-1}$. Therefore, it is enough to study the pre-images of straight lines by the elliptic function $f(\lambda):=\frac{\wp'(\lambda)}{\wp(\lambda)+c}$, where the lattice of periods is $\Lambda=\{\mathbb{Z}(2\omega_1)+\mathbb{Z}(2\omega_2)\}$. Since $\omega_1\in\mathbb{R}$ and $\omega_2\in i\mathbb{R}$, the function $\wp$ and its derivative $\wp'$ have rectangular lattice so that $\wp(z)=\overline{\wp(\overline{z})}, \wp'(z)=\overline{\wp'(\overline{z})}$. This implies that for all $t\in\mathbb{R}$,
\begin{itemize}
\item $\wp(t2\omega_j)=\overline{\wp(t\overline{2\omega_j})}=\overline{\wp(t2\omega_j)}$ and $\wp(\omega_i+t2\omega_j)=\overline{\wp(\omega_i+t2\omega_j)}$,\smallskip
\item $\wp'(t2\omega_j)=\pm\overline{\wp'(t2\omega_j)}$ and $\wp'(\omega_i+t2\omega_j)=\pm\overline{\wp'(\omega_i+t2\omega_j)}$.
\end{itemize}
Since $c=\frac{\eta_1}{\omega_1}\in\mathbb{R}$, we see that
\begin{itemize}
\item $f(\mathbb{R})\subset\mathbb{R}$ and $f(i\mathbb{R})\subset i\mathbb{R}$.\smallskip
\item $f(\mathbb{R}+\omega_2)\subset\mathbb{R}$ and $f(\omega_1+i\mathbb{R})\subset i\mathbb{R}$.
\end{itemize} 
Therefore, $f^{-1}(\mathbb{C}-(\mathbb{R}\cup i\mathbb{R}))$ consists of 4 open sub-rectangles which are divided by $\omega_1$ and $\omega_2$ in the fundamental region $\{z\in\mathbb{C}:0\leq Re(z)\leq 2\omega_1,0\leq Im(z)\leq Im(2\omega_2)\}$. This provides an approximate, but useful, information on the location of geodesics.

%--------------------------------------------------------------------------------------------
%---------------------------------------------------------------------------------------------

\section{On the variety $Z^p_0\cup Z^p_1$}

Let $\Omega$ be a bounded domain in $\mathbb{C}^n$ and $K(z,\overline{w})$ the Bergman kernel of $\Omega$. Fix a point $p\in\Omega$. Then the Bochner connection $\nabla^p$ can be constructed over $\Omega^p=\Omega-(Z^p_0\cup Z^p_1)$. Whether $Z^p_0$ is empty is related to the well-known Lu Qi-Keng conjecture; a bounded domain $\Omega$ is called a {\it Lu Qi-Keng domain} if $Z^p_0$ is empty for each point $p\in\Omega$.

\begin{rem}
It was anticipated in 1960's that every bounded domain should be a Lu Qi-Keng domain, called the Lu Qi-Keng conjecture. However, many counterexamples have been discovered (\cite{Skwarczynski1969}, \cite{boas1986,boas1996}, and others), and in contrast, many domains are Lu Qi-Keng.
\end{rem}

%In the light of the Cheng-Yau conjecture \cite{yau1982} (solutions are announced in \cite{fw1997} and \cite{huangxiao2016}), we present the following

%\begin{prop}
%Let $\Omega$ be a strictly pseudoconvex domain with smooth boundary. If the Bergman metric of $\Omega$ is K\"{a}hler-Einstein, then $Z^p_0=Z^p_1$ for every $p\in\Omega$.
%\end{prop}

%\begin{proof}
%It turns out that $\frac{K(z,\overline{z})}{\det[g_{j\overline{k}}]}$ is a positive constant if the Bergman metric is K\"{a}hler-Einstein (cf. \cite{fw1997}, Proposition 1.1). Therefore, the polarization gives the result.
%\end{proof}

On the other hand, note that
$$
\hbox{\rm det}[G(z,\overline{p})]=\frac{\hbox{\rm det}[K(z,\overline{p})\frac{\partial^2}{\partial z_i \partial \overline{w_j}}\big|_{w=p} K(z,\overline{w})-\frac{\partial}{\partial z_i}K(z,\overline{p})\frac{\partial}{\partial\overline{w_j}}\big|_{w=p}K(z,\overline{w})]}{K(z,\overline{p})^{2n}}.
$$
Set
$$
\widehat{Z}^p_1:=\Big\{z\in\Omega:\hbox{\rm det}\Big[K(z,\overline{p})\frac{\partial^2 K(z,\overline{w})}{\partial z_i \partial \overline{w_j}}\Big|_{w=p} -\frac{\partial K(z,\overline{p})}{\partial z_i}\frac{\partial K(z,\overline{w})}{\partial\overline{w_j}}\Big|_{w=p}\Big]=0\Big\}.
$$
It is known that if $n>1$, then $Z^p_0\subset \widehat{Z}^p_1$, and the statement is false if $n=1$ (cf. \cite{dinew2011}, Lemma 2.1). In particular, ${Z^p_0} \nsubseteq \widehat{Z}^p_1$ for the annulus in the complex plane. Therefore, it is natural to ask whether a domain of higher dimension satisfying $Z^p_0\subsetneq \widehat{Z}^p_1$ exists.

\begin{thm}
There exists a smooth bounded strongly pseudoconvex domain $\Omega$ satisfying $Z^p_0\subsetneq \widehat{Z}^p_1$ for some point $p\in\Omega$.
\end{thm}

\begin{proof}
We first prove that when $r$ is small enough, the product domain $A_r\times D$ has a point $p$ such that $Z^p_0\subsetneq \widehat{Z}^p_1$, where $A_r:=\{z\in\mathbb{C}:0<r<|z|<1\}$ is the annulus and $D$ is the unit disk in $\mathbb{C}$.

Let $K((z_1,z_2),(\overline{w_1},\overline{w_2}))=K_{A_r}(z_1,\overline{w_1})K_D(z_2,\overline{w_2})$ be the Bergman kernel of $A_r\times D$. Set 
$$
F_{\Omega}(z,\overline{w}):=\hbox{\rm det}[K_{\Omega}(z,\overline{w})\frac{\partial^2}{\partial z_i \partial \overline{w_j}} K_{\Omega}(z,\overline{w})-\frac{\partial}{\partial z_i}K_{\Omega}(z,\overline{w})\frac{\partial}{\partial\overline{w_j}}K_{\Omega}(z,\overline{w})].
$$ Then
$$
F_{A_r\times D}((z_1,z_2),(\overline{w_1},\overline{w_2}))=K_{A_r}(z_1,\overline{w_1})^2K_D(z_2,\overline{w_2})^2F_{A_r}(z_1,\overline{w_1})F_D(z_2,\overline{w_2}).
$$
Since $K_D(z_2,\overline{w_2})^2F_D(z_2,\overline{w_2})\neq0$ for all $(z_2,\overline{w_2})\in D\times\overline{D}$, it suffices to show that there exists a point $(z,\overline{p})\in A_r\times\overline{A_r}$ satisfying $K_{A_r}(z,\overline{p})\neq 0$ and
$$
K_{A_r}(z,\overline{p})^2F_{A_r}(z,\overline{p})=K_{A_r}(z,\overline{p})^4\partial_z\overline{\partial_w}|_{w=p}\log K_{A_r}(z,\overline{w})=0.
$$
It is known that if $r$ is sufficiently close to $0$ and $p$ is on the real axis, such a point $(z,\overline{p})$ exists, where $z$ is near the imaginary axis (see \cite{dinew2011}, the proof of Theorem 1.5). 

Now we modify this example to the case of irreducible strictly pseudoconvex domains as follows: Consider a strictly pseudoconvex exhaustion $\Omega_j$ for $A_r\times D$. Note that they are irreducible domains (cf. \cite{huckleberry1977}). On the other hand, the Bergman kernel of $\Omega_j$ and its derivatives uniformly converge on compacta to those of $A_r\times D$ (cf. \cite{ramadanov1967}). By Hurwitz's theorem, $\Omega_j$ satisfies ${Z^p_0} \subsetneq \widehat{Z}^p_1$ when $j$ is large enough.
\end{proof}

%------------------------------------------------------------------------------------------------------------

\section{A generalization of the Lu theorem}

We present an application of the Bochner connection. Let $\Omega$ be a bounded domain in $\mathbb{C}^n$ and $M$ a complex manifold with the positive-definite Bergman metric. Denote their Bergman metric by $\beta_{\Omega}$ and $\beta_M$, respectively. Call the point $p\in\Omega$ a {\it pole of the Bochner connection} $\nabla^p$ whenever $\text{rep}_p:\Omega\rightarrow\mathbb{C}^n$ is one-to-one.

\begin{thm} \label{main}
Suppose that $\Omega$ has a pole $p$ of $\nabla^p$. If there is a surjective holomorphic map $f:\Omega\rightarrow M$ satisfying $f^{\ast}\beta_{M}=\beta_{\Omega}$, then $f$ is a biholomorphism.
\end{thm}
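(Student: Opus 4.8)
The plan is to exploit the fact that a map preserving the Bergman metric automatically preserves the Bochner connection, and therefore intertwines the holomorphic exponential maps; in Bergman representative coordinates $f$ then becomes a single linear map, and the pole hypothesis upgrades this to global injectivity.

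First I would check that $f$ is a local biholomorphism. Since $f^{\ast}\beta_{M}=\beta_{\Omega}$ and $\beta_{\Omega}$ is positive-definite, $df_z$ is injective at every $z\in\Omega$; combined with the surjectivity of $f$ this forces $\dim_{\mathbb{C}}M=n$, so each $df_z$ is an isomorphism and $f$ is a local biholomorphism. In particular $q:=f(p)$ lies in $M^{q}$, because $K_{M}(q,\ov{q})>0$ and $\det(g_{j\ov{k}}(q))>0$ by positive-definiteness, so the Bochner connection $\nabla^{q}$ and the representative map $\mathrm{rep}_{q}$ are defined near $q$.

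Next I would establish an intertwining identity
$$
\mathrm{rep}_{q}\circ f \;=\; A\circ \mathrm{rep}_{p}
\qquad\text{on } \Omega^{p}\cap\big(\Omega\setminus f^{-1}(Z^{q}_{0}\cup Z^{q}_{1})\big),
$$
where $A$ is a fixed $\mathbb{C}$-linear isomorphism (equal to $df_{p}$ up to the normalization factors in the two representative maps). The point is that the Bochner connection, and hence its holomorphic exponential map $\mathrm{exph}$, is determined entirely by the Christoffel symbols $\Gamma^{j}_{kl}=\partial_{l}g_{k\ov{m}}\,g^{\ov{m}j}$, which depend only on $\beta$. Because $f^{\ast}\beta_{M}=\beta_{\Omega}$, the map $f$ preserves these symbols, so by Theorem \ref{connection}(3) (applied on any neighborhood where $f$ is a genuine biholomorphism) $f$ carries $\nabla^{p}$-geodesics issuing from $p$ to $\nabla^{q}$-geodesics issuing from $q$ with matched initial velocities; equivalently $f\circ\mathrm{exph}_{p}=\mathrm{exph}_{q}\circ df_{p}$ near $p$. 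Taking inverses and recalling that $\mathrm{rep}_{p}=\mathrm{exph}_{p}^{-1}$ up to a linear normalization (Proposition \ref{geo} and the surrounding discussion), the displayed identity holds on a neighborhood of $p$. Since all three maps are holomorphic off analytic varieties and $\Omega^{p}$ is connected (being the complement of an analytic set in a domain), the identity propagates to the whole common domain by the identity theorem. This is the step I expect to be the main obstacle: $f$ is a priori only locally biholomorphic, so I cannot invoke Theorem \ref{clin} globally and must instead route the argument through the connection, while keeping careful track of the analytic sets $Z^{q}_{0},Z^{q}_{1}$ and their preimages to see that the common domain is a dense connected open subset of $\Omega$.

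Finally I would deduce injectivity. On the dense open connected set $S:=\Omega^{p}\cap\big(\Omega\setminus f^{-1}(Z^{q}_{0}\cup Z^{q}_{1})\big)$ the right-hand side $A\circ\mathrm{rep}_{p}$ is injective, because $A$ is a linear isomorphism and $\mathrm{rep}_{p}$ is injective by the pole hypothesis; hence $f$ is injective on $S$. A standard openness argument then promotes injectivity on $S$ to injectivity on all of $\Omega$: if $f(a)=f(b)$ with $a\neq b$, choose disjoint neighborhoods on which $f$ is biholomorphic onto open sets whose images overlap near $f(a)$, and use the density of $S$ to produce distinct points of $S$ with the same image, contradicting injectivity on $S$. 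Thus $f$ is an injective, surjective local biholomorphism, so its set-theoretic inverse is holomorphic and $f$ is a biholomorphism.
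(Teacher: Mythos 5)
Your proof is correct, and its core --- deriving the intertwining identity $\mathrm{rep}_q\circ f=A\circ\mathrm{rep}_p$ from the fact that a Bergman isometry preserves the K\"ahler connection, hence (by uniqueness of polarization and holomorphicity of $f$) the Bochner connection, and then using the pole hypothesis to make $\mathrm{rep}_p$ injective on $\Omega^p$ --- is exactly the paper's. Where you genuinely diverge is the endgame. The paper does not argue by density: it uses the pole hypothesis to define a single-valued holomorphic left inverse $g=\mathrm{rep}_p^{-1}\circ A\circ\mathrm{rep}_q$ on $M-Z^q$, uses the surjectivity of $f$ to see that $g(M-Z^q)\subset\Omega$ and hence that $g$ is bounded, extends $g$ across the analytic set $Z^q$ by the Riemann extension theorem, and concludes that $f$ is one-to-one because it admits a global left inverse. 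You instead obtain injectivity of $f$ only on the dense connected set $S=\Omega^p\setminus f^{-1}(Z^q_0\cup Z^q_1)$ and then upgrade it to all of $\Omega$ by the local-biholomorphism/density argument. Both routes are valid; yours is more elementary at the finish (no Riemann extension, no appeal to boundedness of the inverse), while the paper's yields the explicit globally defined inverse map on all of $M$, which is a slightly stronger byproduct. Two small points you should make explicit if you write this up: (i) in the density step you need $f(U_a'\cap S)$ and $f(U_b'\cap S)$ to be open and dense in the common image $W$ (they are, since $S$ is the complement of a proper analytic subset and $f$ restricts to homeomorphisms onto $W$), so that their intersection is nonempty; and (ii) $f^{-1}(Z^q_0\cup Z^q_1)$ is a proper analytic subset of $\Omega$ precisely because $f$ is surjective, which is what guarantees that $S$ is nonempty, dense, and connected, so the identity theorem applies.
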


This theorem is a generalization of the following well-known result.

\begin{thm}[Lu Qi-Keng \cite{Lu1966}] \label{lu}
If $\Omega$ is a bounded domain in $\mathbb{C}^n$, whose Bergman metric is complete and has constant holomorphic sectional curvature, then $\Omega$ is biholomorphic to the unit ball.
\end{thm}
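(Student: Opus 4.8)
The plan is to reduce the global statement to the injectivity of the Bergman representative map at the pole $p$, the mechanism being that the hypothesis $f^{\ast}\beta_M=\beta_\Omega$ forces $f$ to be affine for the two Bochner connections. First I would note that $f$ is a local biholomorphism: writing the isometry condition in local coordinates as $G_\Omega(z,\ov{z})=J_f(z)\,G_M(f(z),\ov{f(z)})\,\ov{J_f(z)}^t$ with $J_f$ the holomorphic Jacobian, the positive-definiteness of $\beta_\Omega$ forces $J_f(z)$ to have full rank at every $z$, and since $\dim\Omega=\dim M=n$ each $df_z$ is invertible.

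The crucial step is to upgrade ``$f$ is a local isometry'' to ``$f$ is affine for the Bochner connections.'' For this I would exploit that the polarized tensor $G(z,\ov{w})$ — and hence $\nabla^p$ — depends only on the Bergman metric and not on the potential ambiguity: the identity $f^{\ast}\beta_M=\beta_\Omega$ gives locally $\log K^{\ast}_M\circ f=\log K^{\ast}_\Omega+\mathrm{Re}\,h$ for a holomorphic $h$, and the mixed operator $\partial_{z_j}\partial_{\ov{w_k}}$ annihilates the pluriharmonic term after polarization. Polarizing the isometry identity therefore yields $G_\Omega(z,\ov{w})=J_f(z)\,G_M(f(z),\ov{f(w)})\,\ov{J_f(w)}^t$, which is exactly the transformation rule \eqref{trg} with $J_f$ in the role of a coordinate change. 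Running the computation of Theorem \ref{connection} verbatim, and setting $w=p$ so that the anti-holomorphic argument becomes $\ov{q}$ with $q:=f(p)$, I obtain the transformation rule \eqref{trc} for the connection $1$-forms relative to the base points $p$ and $q$; that is, $f$ carries $\nabla^p$-geodesics to $\nabla^{q}$-geodesics.

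Consequently, since $\mathrm{rep}_p=\mathrm{exph}_p^{-1}$ straightens geodesics into lines through the origin, the composite $\mathrm{rep}^M_{q}\circ f\circ\mathrm{exph}^\Omega_p$ sends lines through the origin to lines through the origin with derivative $df_p$ at $0$, and hence equals the linear isomorphism $L:=df_p$; this is the local-isometry analogue of Theorem \ref{clin} via Remark \ref{remc}. Thus $\mathrm{rep}^M_{q}\circ f=L\circ\mathrm{rep}^\Omega_p$ near $p$, and by analyticity the identity persists on the dense connected open set $\Omega':=\Omega^p\cap f^{-1}(M^{q})$. Because $p$ is a pole, $\mathrm{rep}^\Omega_p$ is injective on $\Omega^p$, so $f(z_1)=f(z_2)$ forces $\mathrm{rep}^\Omega_p(z_1)=\mathrm{rep}^\Omega_p(z_2)$ and hence $z_1=z_2$; therefore $f$ is injective on $\Omega'$.

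Finally I would promote injectivity on the dense set $\Omega'$ to all of $\Omega$ by a spreading argument: if $f(a)=f(b)$ with $a\neq b$, choosing disjoint neighborhoods on which $f$ is biholomorphic produces two holomorphic local inverses $s_a,s_b$ with disjoint images, defined on a common open set $W$; removing the nowhere-dense analytic sets $s_a^{-1}(\Omega\setminus\Omega')$ and $s_b^{-1}(\Omega\setminus\Omega')$ leaves points of $W$ with two distinct $\Omega'$-preimages, contradicting injectivity on $\Omega'$. Hence $f$ is an injective, surjective local biholomorphism, i.e. a biholomorphism. I expect the main obstacle to be the second step: justifying that the polarized metric, and therefore the Bochner connection and the representative coordinates, is preserved by a mere local isometry which need not respect the Bergman kernel itself, together with the careful bookkeeping of the base point $p\mapsto q=f(p)$ so that the rule \eqref{trc} comes out with the correct anti-holomorphic argument.
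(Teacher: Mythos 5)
Your proposal proves the wrong statement. What you have written is an argument for Theorem \ref{main}: you start from the hypotheses ``$\Omega$ has a pole $p$ of $\nabla^p$'' and ``$f^{\ast}\beta_M=\beta_\Omega$ for a surjective holomorphic $f$'' and conclude that $f$ is a biholomorphism. The statement at hand, however, is Theorem \ref{lu}: a bounded domain whose Bergman metric is \emph{complete} and of \emph{constant holomorphic sectional curvature} is biholomorphic to the unit ball. Neither completeness nor the curvature hypothesis appears anywhere in your argument, no candidate biholomorphism $\Omega\to\mathbb{B}^n$ is ever produced, and the map $f$ and the pole $p$ that drive your entire construction are simply not part of the data of Theorem \ref{lu}.

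The paper's proof of Theorem \ref{lu} is exactly the reduction to Theorem \ref{main}, and that reduction is what is missing from your text. One must first pin down the sign of the curvature constant $c$: if $c>0$, completeness together with Myers' theorem would force $\Omega$ to be compact, which is absurd for a domain; if $c=0$, the universal cover would be $\mathbb{C}^n$ and the covering map onto the bounded domain $\Omega$ would be constant by Liouville's theorem. Hence $c<0$, and then the classification of complete simply connected K\"ahler manifolds of constant negative holomorphic sectional curvature identifies the universal cover of $\Omega$ with $\mathbb{B}^n$, the covering map $f:\mathbb{B}^n\to\Omega$ being a Bergman isometry. Since $\mathbb{B}^n$ is a complete circular domain, its center is a pole of the Bochner connection, so Theorem \ref{main} (applied with $\mathbb{B}^n$ in the role of the source domain and $\Omega$ in the role of $M$) yields that $f$ is injective, hence a biholomorphism. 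Your argument, had it been offered for Theorem \ref{main}, tracks the paper's proof of that theorem fairly closely (differing mainly in how the local inverse is globalized: you use a spreading argument with local sections, the paper extends a global left inverse via the Riemann extension theorem); but as a proof of Theorem \ref{lu} it omits every step that actually uses the hypotheses of that theorem.
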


\begin{proof} [Proof of $\ulcorner Theorem\ \ref{main} \Rightarrow Theorem\ \ref{lu}\lrcorner$]
Although the below proof is included in the proof of Theorem 4.2.2 in \cite{GKK2011}, we recall that for convinience. Let $c$ be the constant, the holomorphic sectional curvature of $\Omega$. If $c>0$, then $\Omega$ would be a complete Riemannian manifold with all sectional curvatures $\geq c/4>0$. Thus Myers' theorem in Riemannian geometry implies that $\Omega$ is compact, a contradiction. If $c=0$, then the covering space is $\mathbb{C}^n$. Therefore the covering map is constant by Liouville's theorem, which is impossible. Consequently, $c<0$. In that case, it is known that the universal covering space of $\Omega$ is biholomorphic to the unit ball $\mathbb{B}^n$ and the covering map $f:\mathbb{B}^n\rightarrow\Omega$ is a Bergman isometry. Therefore $f$ has to be one-to-one by Theorem \ref{main}, and hence the conclusion of Theorem \ref{lu} follows.
\end{proof}

\begin{rem}
Notice that Theorem \ref{main} does not assume the completeness of the Bergman metric $\beta_{\Omega}$. Moreover, the bounded domain $\Omega$ need not possess the constant holomorphic sectional curvature. Besides the unit ball, the following domains satisfy the hypothesis of Theorem \ref{main}.
\begin{itemize}
\item Every complete circular domain; the center is a pole.
\item Every bounded homogeneous domain; every point is a pole (cf. \cite{Xu1983}).
\end{itemize}
More generally, every bounded domain, which possesses a point $p$ such that the matrix $G(z,\overline{p})$ is independent of $z$, satisfies the hypothesis (the point $p$ is a pole). This is called a {\it representative domain} according to \cite{Lu1984}.
\end{rem}
With slight modification of the proof of Theorem 4.2.2 in \cite{GKK2011}, we present
\begin{proof} [Proof of Theorem \ref{main}]
We are only to show that $f$ is one-to-one. Since $f^{\ast}\beta_{M}=\beta_{\Omega}$ implies that $df$ is non-singular, $f$ is locally invertible. Let $V$ be a neighborhood of $p$ and $U$ a neighborhood of $q:=f(p)$ such that $f|_{V}:V\rightarrow U$ is a biholomorphism. Denote by $g_0$ the inverse of $f|_V$.

On the other hand $\nabla_{\Omega}=f^{\ast}\nabla_{M}$, since $f^{\ast}\beta_{M}=\beta_{\Omega}$, where $\nabla$ denotes the Bergman metric connection. The uniqueness of the polarization and the holomorphicity of $f$ yield that $\nabla_{\Omega}^{p}=f^{\ast}\nabla_{M}^{q}$. This means that $f$ maps geodesics of $\nabla^p$ to geodesics of $\nabla^q$, and one sees that $A:=\text{rep}_q \circ f|_V \circ \text{rep}_p^{-1}$ is $\mathbb{C}$-linear as in Remark \ref{remc}. Thus $g_0:=f|_V^{-1}=\text{rep}_p^{-1}\circ A\circ\text{rep}_{q}$, where $A$ is an invertible $\mathbb{C}$-linear map.

Note that $\text{rep}_q\circ f=A^{-1}\circ\text{rep}_p$ on $\Omega-(Z^p\cup f^{-1}(Z^q))$, where $Z^p:=Z^p_0\cup Z^p_1$ and $Z^q:=Z^q_0\cup Z^q_1$. Then the restriction map $\text{rep}_p^{-1}|_{A\circ\text{rep}_q(M-(f(Z^p)\cup Z^q))}$ is a well-defined holomorphic map. Since the linear map $A$ is everywhere defined and $\text{rep}_q$ extends to a holomorphic mapping of $M-Z^q$, so does $g_0$. Denote by $g$ the extension of $g_0$.

Let $X:=f^{-1}(Z^q)$. Then $g\circ f:\Omega-X\rightarrow\mathbb{C}^n$ is holomorphic and $g\circ f(z)=z$ for every $z\in\Omega-X$. Therefore, for every $\zeta\in M-Z^q$, choose $x\in\Omega$ such that $f(x)=\zeta$. Since $g(\zeta)=g(f(x))=x$, $g(M-Z^q)\subset\Omega$. Note that $g$ is a bounded holomorphic map on the connected manifold $M-Z^q$. By the Riemann extension theorem, $g$ extends to a holomorphic mapping of $M$ into $\mathbb{C}^n$. This shows that $g$ is the left inverse to $f$, and hence $f$ is one-to-one.
\end{proof}

%--------------------------------------------------------------------------------------------------

\subsection*{Acknowledgements}
The author would like to express his deep gratitude to Professor Kang-Tae Kim for valuable guidance and encouragements, and to Professor J.P. Demailly for pointing out the relevance of \cite{Demailly1982,Demailly1994}. The author also would like to thank the referees for their valuable comments. This work is part of author's Ph.D. dissertation at Pohang University of Science and Technology.

\end{document}